\documentclass[11pt]{article}

\usepackage[utf8]{inputenc}
\usepackage{amsmath,amssymb,amsthm,amsfonts}
\usepackage{mathrsfs}
\usepackage[hidelinks]{hyperref}
\usepackage[margin=1in]{geometry}

\usepackage[backend=bibtex,style=alphabetic]{biblatex}
\bibliography{references}

\newtheorem{theorem}{Theorem}[section] 
\newtheorem{definition}[theorem]{Definition}

\newtheorem{proposition}[theorem]{Proposition}
\newtheorem{corollary}[theorem]{Corollary}

\theoremstyle{remark}

\title{\textbf{The flag major index on conjugacy classes\\ of wreath products}}
\author{Harel Rozenfeld
\thanks{Department of Mathematics, Bar-Ilan University, Ramat-Gan 52900, Israel. {\tt harelrozen@gmail.com}}}
\date{26/10/26}

\begin{document}

\maketitle

\begin{abstract}
A classical result, independently proved by Garsia and by Wachs, gives a simple and elegant closed expression for the  
evaluation, at a primitive root of unity, of the major index generating function over a conjugacy class in the symmetric group. In this paper, we  generalize this result to the flag major index generating function over conjugacy classes in any wreath product \(\mathbb{Z}_{r}\wr S_{n}\). 
Our approach is new, and applies
  conjugation by a
Coxeter element as  a cyclic group action on the conjugacy class. This  leads to a simple formula 
in the general case.
\end{abstract}

\tableofcontents

\section{Introduction}

The study of permutation statistics over conjugacy classes is a classical topic in algebraic combinatorics. One of the most important statistics is the \emph{major index}, defined on the symmetric group $S_{n}$ as the sum of 
the descent positions of a permutation. A central problem is to evaluate, for a given conjugacy class $C \subseteq S_{n}$, the sum of $\omega_{n}^{\mathrm{maj}(\pi)}$ over all $\pi \in C$, where $\omega_{n}$ is a primitive $n$th root of unity. 

This problem has been studied in two complementary ways. 
Garsia~\cite{gar90} provided an algebraic approach using representation theory, while Wachs~\cite{wachs-1991} gave a purely combinatorial treatment. Both showed that when one specializes the generating function of the major index at roots of unity, elegant and surprisingly simple evaluations emerge, depending only on the structure of the conjugacy class. These results form the foundation of the subject.

For reference, we recall their result in a form relevant to our work. Throughout this paper, for every positive integer $m$,
we write $\omega_m=e^{2\pi i/m}$. 
This choice of primitive root is relevant in the wreath-product setting: when $r>2$, replacing $\omega_{rn}$ by a different primitive $rn$-th root of unity may change the value of the sum on conjugacy classes for which it is nonzero. Thus, all root-of-unity evaluations below are understood with the above normalization. Throughout the paper, we assume \(n\geq 2\).

\medskip

\noindent\textbf{Theorem (Garsia, Wachs).}
\emph{Let $C$ be a conjugacy class of $S_{n}$, and let $\omega_{n}$ be a primitive 
$n$th root of unity. Then
\[
\sum_{\pi \in C} \omega_{n}^{\mathrm{maj}(\pi)} =
\begin{cases}
\mu(k), & \text{if } C = k^{\,n/k}, \\
0, & \text{otherwise},
\end{cases}
\]
where $\mu$ is the Möbius function and $C = k^{\,n/k}$ denotes the class of permutations consisting of $n/k$ disjoint cycles, each of length $k$.}


\medskip

In this paper we address the natural generalization of this result to the wreath product $\mathbb{Z}_{r}\wr S_{n}$, the group of colored permutations. For this group the relevant statistic is the \emph{flag major index}, introduced by Adin and 
 Roichman~\cite{adin-roichman}, which incorporates both descent positions and colors. 
The flag major index has been studied extensively in algebraic and enumerative combinatorics, particularly in connection with 
invariant theory and permutation statistics. For 
specializations at roots of unity see, e.g., \cite{AGR05} and follow-ups; for  
systematic developments see, e.g., \cite{FH05,HLR05, BC04};  
   for a recent treatment see \cite{LY25} and references therein. 

\medskip

Our main result is the following theorem.

\medskip
\noindent\textbf{Theorem.}
    Let \(C\) be a conjugacy class of \(\mathbb{Z}_{r}\wr S_{n}\). Then
    \[
        \sum_{g\in C}\omega_{rn}^{\,\mathrm{fmaj}(g)} \;=\;
        \begin{cases}
        \omega_{r}^{c/k} \,\mu(k), & \text{if } C = k_{c}^{\,n/k} \text{ and } \gcd(r,k)=1,\\[4pt]
        0, & \text{otherwise},
        \end{cases}
    \]
   where \(k_c^{\,n/k}\) denotes the conjugacy class in which every cycle has length \(k\) and color \(c\). In the first case, \(c/k\) denotes \(c\,k^{-1}\in\mathbb{Z}_r\), where \(k^{-1}\) is the inverse of \(k\) modulo \(r\).
\medskip

The approach taken in this work is based on analyzing the action of
conjugation by Coxeter elements in \(S_n\) and $\mathbb{Z}_{r}\wr S_{n}$. This point of view allows us to recover the symmetric group case in a uniform way and then extend the argument to $\mathbb{Z}_{r}\wr S_{n}$.

The structure of the paper is as follows. In Section~2 we provide background on permutation statistics, colored permutations, and the flag major index, including cyclic descents. In Section~3 we develop a new proof for the 
symmetric group, based on conjugation by the Coxeter element. In Section~4 we generalize this method to wreath products, thereby establishing the theorem stated above.

\section{Preliminaries}

We begin by recalling the basic objects and notations that will be used throughout the paper. Our main focus will be on the group of \emph{colored permutations}, also known as the wreath product \(\mathbb{Z}_{r}\wr S_{n}\). We also introduce the relevant permutation statistics, in particular the major index and the flag-major index.

\subsection{The symmetric group}

Let \(S_{n}\) denote the symmetric group on \([n]=\{1,2,\dots,n\}\). An element \(\sigma \in S_{n}\) is written in one-line notation as

\[
\sigma = [\sigma(1),\sigma(2),\dots,\sigma(n)].
\]

For \(\sigma \in S_{n}\), the \emph{descent set} is defined by

\[
\mathrm{Des}(\sigma) := \{\, i \in [n-1] : \sigma(i) > \sigma(i+1) \,\}.
\]

The \emph{major index} of \(\sigma\) is then

\[
\mathrm{maj}(\sigma) := \sum_{i \in \mathrm{Des}(\sigma)} i.
\]

\subsection{Colored permutations}

Fix integers \(n \geq 2\) and \(r \geq 1\).  
The group of \emph{colored permutations} is the wreath product

\[
\mathbb{Z}_{r}\wr S_{n} = \{\, (c,\sigma) : c=(c_{1},\dots,c_{n}) \in \mathbb{Z}_{r}^{n},\; \sigma \in S_{n}\,\}.
\]
We write an element \(g=(c,\sigma)\) in one-line form as 

\[
g = [g(1),\dots,g(n)] = [\,\sigma(1)^{c_{1}},\;\sigma(2)^{c_{2}},\;\dots,\;\sigma(n)^{c_{n}}\,],
\]
where the superscript denotes the color \(c_{i}\in \mathbb{Z}_{r}\) attached to the entry \(\sigma(i)\).

\medskip
If \(C=(i_1\,i_2\,\ldots\,i_\ell)\) is a cycle of \(\sigma\), we define its
\emph{color} by
\[
\operatorname{col}(C):=
c_{i_1}+c_{i_2}+\cdots+c_{i_\ell}\in\mathbb Z_r.
\]
Thus the sum is taken over the positions belonging to the cycle (equivalently,
over the values appearing in it).

Two elements of \(\mathbb Z_r\wr S_n\) are conjugate if and only if, for every
\(\ell \geq 1\) and \(c\in\mathbb Z_r\), they have the same number of cycles of length
\(\ell\) and color \(c\).

\paragraph{Matrix representation.}
Each element \(g=(c,\sigma)\) can be represented as an \(n\times n\) \emph{monomial matrix} over the complex numbers.
Let \(\omega_{r}=e^{2\pi i/r}\).  
Then define the matrix \(\Phi(g)\) by
\[
\bigl(\Phi(g)\bigr)_{i,j}=
\begin{cases}
\omega_{r}^{\,c_{j}}, & i=\sigma(j),\\
0, & \text{otherwise}.
\end{cases}
\]
Thus each column contains exactly one nonzero entry, equal to an \(r\)-th root of unity.

\paragraph{Example.}
For instance, let \(r=3,n=3\), take \(\sigma=(1\,2\,3)\) and \(c=(1,0,2)\).  
Then
\[
g = [2^{1},3^{0},1^{2}], \qquad
\Phi(g)=
\begin{pmatrix}
0 & 0 & \omega_{3}^{2} \\
\omega_{3}^{1} & 0 & 0 \\
0 & 1 & 0
\end{pmatrix}.
\]

\paragraph{Multiplication.}
We define multiplication in \(\mathbb{Z}_{r}\wr S_{n}\) so that
\[
\Phi(g h) \;=\; \Phi(g)\Phi(h).
\]
In other words, the group law of \(\mathbb{Z}_{r}\wr S_{n}\) is exactly the one induced from
matrix multiplication in this faithful monomial representation.
Equivalently, if \(g=(c,\sigma)\) and \(h=(c',\sigma')\), then
\[
gh=(c'',\sigma\sigma'),
\]
where
\[
c''_j=c'_j+c_{\sigma'(j)}.
\]

\subsection{Order on colored numbers}

To define descent-type statistics, we impose a linear order on the set of colored numbers.  
For each \(a \in [n]\) and color \(d \in \mathbb{Z}_{r}\), we write the symbol \(a^{d}\).  
The order is given by
\[
1^{r-1} \prec 2^{r-1} \prec \cdots \prec n^{r-1} \prec \cdots \prec 1^{1} \prec \cdots \prec n^{1} 
\prec 1^{0} \prec 2^{0} \prec \cdots \prec n^{0}.
\]
Intuitively, higher colors are considered smaller, and within the same color we compare the underlying integers.

\subsection{Descents and major index in
\texorpdfstring{\(\mathbb{Z}_{r}\wr S_{n}\)}{Zr wreath Sn}}

Given \(g=[\sigma(1)^{c_{1}},\dots,\sigma(n)^{c_{n}}]\in \mathbb{Z}_{r}\wr S_{n}\), we define the descent set
\[
\mathrm{Des}(g) := \{\, i \in [n-1] : g(i) \succ g(i+1) \,\},
\]
where the comparison is with respect to the order defined above.  
The \emph{major index} of \(g\) is then
\[
\mathrm{maj}(g) := \sum_{i \in \mathrm{Des}(g)} i.
\]

\subsection{The flag-major index}

The \emph{flag-major index} was introduced by Adin and Roichman~\cite{adin-roichman} 
as a natural generalization of the classical major index to the group 
of colored permutations \(\mathbb{Z}_{r}\wr S_{n}\). 
For \(g=(c,\sigma) \in \mathbb{Z}_{r}\wr S_{n}\) with 
\(c=(c_{1},\dots,c_{n}) \in \mathbb{Z}_{r}^{n}\), we identify each color
\(c_i\in\mathbb{Z}_r\) with its standard representative in
\(\{0,1,\dots,r-1\}\). With this convention, the flag-major index is the
nonnegative integer
\[
\mathrm{fmaj}(g) := r \cdot \mathrm{maj}(g) + \sum_{i=1}^{n} c_{i}.
\]

\subsection{Cyclic descents}

In addition to ordinary descents, it is often useful to consider
\emph{cyclic descents}.
For a colored permutation
\[
g=[\sigma(1)^{c_1},\dots,\sigma(n)^{c_n}]
\in \mathbb{Z}_r\wr S_n,
\]
we define
\[
\mathrm{cDes}(g)
:=
\{\, i\in[n] : g(i)\succ g(i+1)\,\},
\]
where we set \(g(n+1):=g(1)\), and the comparison is taken with respect to the
order on colored numbers. Thus, the usual descent set is extended by also
testing the pair \((g(n),g(1))\). 

This notion of cyclic descents goes back to work of Cellini~\cite{cellini-1998-cyclic-eulerian}
(and is often traced, in a broader Lie–theoretic context, to Klyachko), and was further
developed in later works such as~\cite{adin-reiner-roichman-2020-cdes}.
In the present paper cyclic descents play a central role: the invariant that 
we shall construct and use throughout is directly based on them. Since our generating functions are always evaluated at roots of unity of order \(rn\),
the difference between descents and cyclic descents does not affect the results.  
In particular, the flag-major index
remains consistent under this convention.

\subsection{A Coxeter element}

For \(r>2\), the wreath product \(\mathbb{Z}_{r}\wr S_{n}\) is not a
Coxeter group, but it is the well-generated complex reflection group
\(G(r,1,n)\). In this setting, Coxeter elements are precisely the
\(n\)-cycles whose cycle-color is primitive in \(\mathbb{Z}_r\)
\cite[Corollary~5.7]{lewis-wang}.

We shall use the following Coxeter element throughout the paper:
\[
\tau = [\,n^{1},\,1^{0},\,2^{0},\,\dots,\,(n-1)^{0}\,].
\]
Its underlying permutation is an \(n\)-cycle and its cycle-color is \(1\),
so \(\tau\) is indeed a Coxeter element of \(G(r,1,n)\).

The element \(\tau\) will play a central role in our arguments. In
particular, the proof of our main result will be based on the action of
conjugation by \(\tau\) on conjugacy classes and on its effect on the
flag-major index. Observe also that
\[
\tau^n=[1^1,2^1,\ldots,n^1],
\]
and hence \(\tau\) has order \(rn\).

\section{The symmetric group  case}
\begin{theorem}[Garsia, Wachs]
Let $C$ be a conjugacy class of $S_n$,
and let $\omega_n$ be a primitive
$n$-th root of unity. Then
\[
\sum_{\pi \in C} \omega_n^{\mathrm{maj}(\pi)} =
\begin{cases}
\mu(k) & \text{if } C = k^{n/k}, \\
0 & \text{otherwise},
\end{cases}
\]
where \( \mu \) is the Möbius function.
\end{theorem}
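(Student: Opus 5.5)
The plan is to let the cyclic group generated by the Coxeter element $c=(1\,2\,\cdots\,n)$ act on $C$ by conjugation, $\pi\mapsto c\pi c^{-1}$. Since $c$ lies in $S_n$, this action preserves $C$. The sum will be replaced by one over cyclic descents. Let $\mathrm{cmaj}(\pi)=\sum_{i\in\mathrm{cDes}(\pi)} i$, with position $n$ allowed. Then $\mathrm{cmaj}(\pi)\equiv\mathrm{maj}(\pi)\pmod n$, so we may sum $\omega_n^{\mathrm{cmaj}(\pi)}$ instead. This is the remark in the preliminaries that descents and cyclic descents agree at order-$n$ roots of unity.

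\textbf{Step 1 (effect of conjugation).} Write $\sigma=c\pi c^{-1}$, so that $\sigma(i+1)=\pi(i)+1$ with indices and values read mod $n$, and the value $n$ becomes $1$. Comparisons between cyclically consecutive entries are therefore preserved after a shift of positions by one, except around the value $n$. Let $\pi(j)=n$. The cyclic ascent at $(j-1,j)$ becomes a descent, and the descent at $(j,j+1)$ becomes an ascent. Consequently the number of cyclic descents $d=\mathrm{cdes}(\pi)$ is invariant on orbits. Moreover $\mathrm{cDes}(\sigma)$ is $\mathrm{cDes}(\pi)+1$ with a single element moved back by one position. This gives
\[
\mathrm{cmaj}(c\pi c^{-1})\equiv \mathrm{cmaj}(\pi)+d-1 \pmod n .
\]
I expect the main obstacle to be checking this carefully, including the wrap-around cases $j=1$ and $j=n$.

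\textbf{Step 2 (orbit sums vanish unless $d=1$).} Take an orbit of size $m$ and set $x=\omega_n^{\,d-1}$. Its contribution is $\omega_n^{\mathrm{cmaj}(\pi)}\sum_{t=0}^{m-1}x^t$. Returning to $\pi$ after $m$ steps forces $x^m=1$. Hence the orbit sum is zero unless $x=1$, that is, unless $d\equiv 1\pmod n$. For $n\ge2$ we have $1\le d\le n-1$, since a cyclic word cannot be all descents or all ascents. So only permutations with exactly one cyclic descent contribute.

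\textbf{Step 3 (the survivors).} A permutation with exactly one cyclic descent is a cyclic rotation of the increasing word, namely $\pi=[s+1,\dots,n,1,\dots,s]=c^{s}$ for $0\le s<n$. Its cycle type is $k^{n/k}$ with $k=n/\gcd(s,n)$, and $\mathrm{maj}(c^s)=n-s$ for $s\ne0$, while $\mathrm{maj}(\mathrm{id})=0$. If $C$ is not of the form $k^{n/k}$, then $C$ contains no survivors and the sum is $0$. If $C=k^{n/k}$, the sum is $\sum_{s:\,\mathrm{ord}(c^s)=k}\omega_n^{-s}$, which is the sum of the primitive $k$-th roots of unity (a Ramanujan sum), equal to $\mu(k)$.
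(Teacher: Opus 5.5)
Your proposal is correct and is essentially the paper's proof. Your $c=(1\,2\,\cdots\,n)$ is the inverse of the paper's $\tau=[n,1,\dots,n-1]$, so $\pi\mapsto c\pi c^{-1}$ is exactly the paper's action $\sigma\mapsto\tau^{-1}\sigma\tau$; you then use the same shift $\mathrm{maj}\mapsto\mathrm{maj}+\#\mathrm{cDes}-1 \pmod n$, the same geometric-series vanishing on orbits, and the same identification of the survivors with $\langle\tau\rangle$ followed by the Möbius evaluation, the only cosmetic difference being that you derive the shift directly from the conjugation rather than from separate left and right multiplications by $\tau^{\mp1}$.
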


This theorem was established by Garsia~\cite{gar90} and 
Wachs~\cite{wachs-1991}.  
In this section we present a new short proof based on conjugation by a Coxeter element.

\begin{proof}
Let \( \tau \in S_n \) be the Coxeter element
\[
\tau = [n,1,2,\dots,n-1].
\]
For any permutation \( \sigma = [\sigma(1), \dots, \sigma(n)] \in S_n \) we have
\[
\sigma \tau = [\sigma(n), \sigma(1), \dots, \sigma(n-1)],
\qquad
\tau^{-1} \sigma = [\sigma(1)+1 \bmod n, \dots, \sigma(n)+1 \bmod n].
\]
Right multiplication by \( \tau \) increases by \(1\) (modulo \(n\)) the position of each cyclic descent.  
Left multiplication by $\tau^{-1}$ preserves all cyclic descents, except for the one involving the value \(n\), which shifts back one position (modulo \(n\)).  
In particular, both operations preserve the number of cyclic descents.

Moreover,
\[
\mathrm{maj}(\sigma \tau) \equiv \mathrm{maj}(\sigma)+\#\mathrm{cDes}(\sigma)\pmod{n},
\qquad
\mathrm{maj}(\tau^{-1}\sigma)\equiv \mathrm{maj}(\sigma)-1\pmod{n}.
\]
Thus for every \(\sigma\in S_n\),
\[
\mathrm{maj}(\tau^{-1}\sigma\tau)\equiv \mathrm{maj}(\sigma)+\#\mathrm{cDes}(\sigma)-1\pmod{n}.
\]

Let \(O\) be an orbit of length \(l > 1\) of this conjugation action, and let \(\sigma\in O\).  
Denote \(d := \#\mathrm{cDes}(\sigma)-1\).  
Then \(\tau^{-l}\sigma\tau^l = \sigma\) and, on the other hand,
\[
    \mathrm{maj}(\tau^{-l}\sigma\tau^l)
    \equiv \mathrm{maj}(\sigma) + dl \pmod{n}.
\]
Hence \(dl\equiv0\pmod{n}\). 
Moreover, \(d \not\equiv 0 \pmod n\). Indeed, otherwise
\(\#\mathrm{cDes}(\sigma)=1\)
and hence
\[
\sigma=[i,\ldots,n,1,\ldots,i-1]
\]
for some \(i\). Therefore, by the computation above, we have
\(\tau^{-1}\sigma\tau=\sigma\).
Thus the orbit of \(\sigma\) has length \(1\), contrary to our assumption. Consequently,
\[
\omega_n^{dl}=1
\qquad\text{and}\qquad
\omega_n^d\neq 1.
\]

Therefore,

\[
\sum_{\pi\in O}\omega_n^{\mathrm{maj}(\pi)}
= \sum_{i=0}^{l-1}\omega_n^{\mathrm{maj}(\tau^{-i}\sigma\tau^{i})}
= \sum_{i=0}^{l-1}\omega_n^{\mathrm{maj}(\sigma)+ i\cdot d} =
\]

\[
\;\omega_n^{\mathrm{maj}(\sigma)}\sum_{i=0}^{l-1}(\omega_n^{d})^{i}
= \omega_n^{\mathrm{maj}(\sigma)}\frac{(\omega_n^{d})^{l}-1}{\omega_n^{d}-1}
= 0.
\]

So every nontrivial orbit contributes zero to the sum.

The trivial orbits are exactly the elements of the cyclic subgroup
\(\langle\tau\rangle\). Indeed, if the orbit of \(\sigma\) is trivial, then
\[
\operatorname{maj}(\tau^{-1}\sigma\tau)\equiv \operatorname{maj}(\sigma)
\pmod n,
\]
and hence \(d\equiv 0\pmod n\). Since \(0\leq d\leq n-2\), we have
\(d=0\), so \(\#\mathrm{cDes}(\sigma)=1\), and therefore
\(\sigma\in\langle\tau\rangle\). Conversely, every power of \(\tau\)
commutes with \(\tau\).
For \(i\in[0,n-1]\), the cycle structure of \(\tau^i\) is \(k^{\,n/k}\), where 
\(k=n/\gcd(i,n)\).  
If \(i=0\), then \(\tau^{0}\) is the identity, which has no descents and hence 
\(\mathrm{maj}(\tau^{0})=0\).  
If \(i>0\), then \(\tau^{i}\) has exactly one cyclic descent, occurring at 
position \(i\), and therefore \(\mathrm{maj}(\tau^{i})=i\).

Let \(C_\lambda\) be a conjugacy class in $S_n$ corresponding to the partition \(\lambda\vdash n\). Since each conjugacy class is invariant under conjugation by \(\tau\), it decomposes into orbits of this action; moreover, every nontrivial orbit contributes zero to the sum. Therefore, it remains only to consider the trivial orbits. That is

\[
\sum_{\pi\in C_\lambda}\omega_n^{\mathrm{maj}(\pi)}
=\sum_{\pi\in C_\lambda\cap\langle\tau\rangle}\omega_n^{\mathrm{maj}(\pi)},
\]

If \(\lambda\neq k^{n/k}\) for any divisor \(k\mid n\), then \(C_\lambda\cap\langle\tau\rangle=\emptyset\) and the sum is zero.  

Otherwise, if \( \lambda = k^{n/k} \), then
\[
C_{\lambda} \cap \langle \tau \rangle
= \left\{ \tau^{i} : i \in [0,n-1],\; \frac{n}{\gcd(i,n)} = k \right\}.
\]

Define
\[
j = \frac{i}{\gcd(i,n)} = i \cdot \frac{k}{n}.
\]
Then \( \gcd(j,k) = 1 \), since:
\[
\gcd(j,k) = \gcd\left( \frac{i}{\gcd(i,n)}, \frac{n}{\gcd(i,n)} \right) = 1.
\]
Thus,
\[
\tau^i = \tau^{j \cdot \frac{n}{k}}, \quad \text{with } \gcd(j,k) = 1,
\]
so
\[
C_{\lambda} \cap \langle \tau \rangle
\subseteq \left\{ \tau^{j \cdot \frac{n}{k}} : j \in [0,k-1],\; \gcd(j,k) = 1 \right\}.
\]

To prove the reverse inclusion, suppose \( j \in [0,k-1] \) with \( \gcd(j,k) = 1 \). Then the order of \( \tau^{j \cdot \frac{n}{k}} \) is:
\[
\frac{n}{\gcd(j \cdot \frac{n}{k},\, n)}
= \frac{n}{\gcd(j \cdot \frac{n}{k},\, \frac{n}{k} \cdot k)}
= \frac{n}{\frac{n}{k} \cdot \gcd(j,k)}
= \frac{n}{\frac{n}{k}} = k,
\]
so the cycle type of \( \tau^{j \cdot \frac{n}{k}} \) is \( k^{n/k} \), i.e., it lies in \( C_\lambda \).

Therefore,
\[
C_{\lambda} \cap \langle \tau \rangle
= \left\{ \tau^{j \cdot \frac{n}{k}} : j \in [0,k-1],\; \gcd(j,k) = 1 \right\}.
\]

Each such element has major index \( \mathrm{maj}(\tau^{j \cdot \frac{n}{k}}) = j \cdot \frac{n}{k} \). Thus,
\begin{align*}
\sum_{\pi \in C_{\lambda}} \omega_n^{\mathrm{maj}(\pi)}
= \sum_{\pi \in C_{\lambda} \cap \langle \tau \rangle} \omega_n^{\mathrm{maj}(\pi)}
= \sum_{\substack{j \in [0,k-1] \\ \gcd(j,k)=1}} \omega_n^{j \cdot \frac{n}{k}}
= \sum_{\substack{j \in [0,k-1] \\ \gcd(j,k)=1}} \omega_k^j
= \mu(k).
\end{align*}
This completes the proof.
\end{proof}

\section{Wreath products}\label{sec:wreath}

In this section we prove the following. Since the case \(r=1\) reduces to the symmetric group \(S_n\), which was treated in Section~3, throughout this section we assume \(r\geq 2\).

\begin{theorem}\label{thm:class-sum-formula}
    Let \(C\) be a conjugacy class of \(\mathbb{Z}_{r}\wr S_{n}\). Then
    \[
        \sum_{g\in C}\omega_{rn}^{\,\mathrm{fmaj}(g)} \;=\;
        \begin{cases}
        \omega_{r}^{c/k} \,\mu(k), & \text{if } C = k_{c}^{\,n/k} \text{ and } \gcd(r,k)=1,\\[4pt]
        0, & \text{otherwise},
        \end{cases}
    \]
   where \(k_c^{\,n/k}\) denotes the conjugacy class in which every cycle has length \(k\) and color \(c\). In the first case, \(c/k\) denotes \(c\,k^{-1}\in\mathbb{Z}_r\), where \(k^{-1}\) is the inverse of \(k\) modulo \(r\).
\end{theorem}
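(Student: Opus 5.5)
The plan is to run the argument of Section~3 again, now with the Coxeter element \(\tau=[n^{1},1^{0},\dots,(n-1)^{0}]\) of \(\mathbb{Z}_r\wr S_n\) acting on \(C\) by conjugation \(g\mapsto \tau^{-1}g\tau\). The goal is to show that every nontrivial orbit contributes \(0\) to \(\sum_{g\in C}\omega_{rn}^{\mathrm{fmaj}(g)}\), so that only \(C\cap\langle\tau\rangle\) survives.

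\emph{Step 1: effect of one conjugation on \(\mathrm{fmaj}\) modulo \(rn\).} From the multiplication rule \(c''_j=c'_j+c_{\sigma'(j)}\), right multiplication gives
\[
g\tau=[\,g(n)\text{ with its color raised by }1,\;g(1),\dots,g(n-1)\,],
\]
and \(\tau^{-1}g\) lowers every value by one cyclically, the value \(1\) becoming \(n\) with its color lowered by \(1\). I would track \(\mathrm{fmaj}=r\,\mathrm{maj}+\sum c_i\), computed with cyclic descents, which is harmless modulo \(rn\) as remarked in Section~2. The aim is a congruence
\[
\mathrm{fmaj}(\tau^{-1}g\tau)\equiv \mathrm{fmaj}(g)+d(g)\pmod{rn},
\]
with \(d(g)\) an invariant of the orbit built from the number of cyclic descents. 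I expect \(d(g)=r\,\#\mathrm{cDes}(g)-1\), up to a correction. A color passing through \(r-1\to 0\) drops the color sum by \(r-1\) rather than raising it by \(1\), but it also creates or destroys a cyclic descent. The point is that modulo \(rn\) these effects combine into a quantity that is constant along the orbit. This bookkeeping, especially the interaction of wrap-around of positions, values and colors with the order \(\prec\), is the step I expect to be the main obstacle. I would first verify it case by case according to the color of the entry moved to the front and the color of the value \(1\).

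\emph{Step 2: nontrivial orbits vanish, and the trivial ones are \(\langle\tau\rangle\).} Let \(O\) be an orbit of length \(l\). Since the orbit closes up, \(d\,l\equiv0\pmod{rn}\), so \(\omega_{rn}^{dl}=1\), and the contribution of \(O\) is a geometric sum \(\omega_{rn}^{\mathrm{fmaj}(g)}\sum_{i<l}\omega_{rn}^{id}\). This sum is \(0\) unless \(d\equiv0\pmod{rn}\). I would then show that \(d\equiv 0\) forces \(g\) to commute with \(\tau\), as in Section~3, where it forced a single cyclic descent. Alternatively, I would invoke that the centralizer of a Coxeter element of a well-generated group is cyclic generated by it, here of order \(h=rn\). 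Either route shows that the fixed points are exactly \(\langle\tau\rangle\).

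\emph{Step 3: evaluate on \(\langle\tau\rangle\).} Write \(i=qn+s\). Then \(\tau^i=\tau^s\tau^{qn}\), so the underlying permutation is a cyclic shift by \(s\), and the total color is \(\equiv i\pmod r\) since total color is additive. I would check directly that \(\mathrm{fmaj}(\tau^i)\equiv i\pmod{rn}\); for instance \(\tau^n\) has all colors \(1\), no descents, and \(\mathrm{fmaj}=n\).

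The underlying permutation has cycle type \(k^{n/k}\) with \(k=n/\gcd(n,i)\), so \(i=j\,n/k\) with \(\gcd(j,k)=1\). Each cycle then has color \(j\bmod r\), because all cycles of \(\tau^i\) are shifts of one another and so share a color. Hence \(C\cap\langle\tau\rangle\) is empty unless \(C=k_c^{n/k}\), in which case
\[
\sum_{g\in C}\omega_{rn}^{\mathrm{fmaj}(g)}=\sum_{\substack{j\in[0,rk-1]\\ \gcd(j,k)=1,\ j\equiv c\ (r)}}\omega_{rk}^{\,j}.
\]

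\emph{Step 4: the character sum.} If \(\gcd(r,k)=1\), the Chinese remainder theorem splits \(\mathbb{Z}_{rk}\cong\mathbb{Z}_r\times\mathbb{Z}_k\), and \(\omega_{rk}^j\) factors as \(\omega_r^{\,j k^{-1}}\omega_k^{\,j r^{-1}}\). With \(j\equiv c\pmod r\) fixed and \(j\) running over the residues mod \(k\) coprime to \(k\), the second factor runs over all primitive \(k\)-th roots of unity. This gives \(\omega_r^{c/k}\mu(k)\), using the Ramanujan-sum evaluation already used in Section~3.

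If \(p\mid\gcd(r,k)\), I would pair \(j\) with \(j+rk/p\). This shift preserves \(j\bmod r\), because \(r\mid rk/p\), and preserves coprimality to \(k\), because \(p\mid k\) so \(k\) and \(rk/p\) share the same primes. The shift multiplies \(\omega_{rk}^j\) by the primitive \(p\)-th root \(\omega_p\), so summing over the resulting \(\mathbb{Z}_p\)-orbits makes the sum vanish.
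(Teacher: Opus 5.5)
\textbf{There is a genuine gap.} Your Steps 3 and 4 are essentially sound. The CRT factorization and the shift $j\mapsto j+rk/p$ are valid variants of the paper's parametrization $j=rm+c'k$ and its shift by $\mathrm{lcm}(r,k)$. The gap is in Steps 1--2, which carry the whole argument.

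\textbf{Step 1: the orbit invariant is not identified.} You assume the increment $\mathrm{fmaj}(\tau^{-1}g\tau)-\mathrm{fmaj}(g)$ is an orbit invariant built from $\#\mathrm{cDes}$, and you guess $r\,\#\mathrm{cDes}(g)-1$. For $r\ge 2$ this fails, because $\#\mathrm{cDes}$ is not invariant under conjugation by $\tau$. Take $r=2$, $n=3$:
\begin{itemize}
\item $g=[1^1,2^0,3^0]$ has one cyclic descent and $\mathrm{fmaj}(g)=1$;
\item $\tau^{-1}g\tau=[1^0,2^1,3^0]$ has two cyclic descents and $\mathrm{fmaj}=3$.
\end{itemize}
So the increment is $2$, not $r\cdot 1-1=1$.

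The paper's new ingredient is the corrected statistic $D(g)=\#\mathrm{cDes}(g)-\chi_S(g)$, with $S=\{g:\ g(1)\in(g(n)^+,g(n))\}$. It proves:
\begin{itemize}
\item $D$ is unchanged by $g\mapsto\tau^{-1}g$, which acts on colored letters as a cyclic shift $T$ preserving cyclic intervals and commuting with ${}^+$;
\item $D$ is unchanged by $g\mapsto g\tau$, via $v\in(u^+,u)\iff u^+\in(v^+,v)$;
\item $\mathrm{fmaj}(\tau^{-1}g)\equiv\mathrm{fmaj}(g)-1$ and $\mathrm{fmaj}(g\tau)\equiv\mathrm{fmaj}(g)+rD(g)+1 \pmod{rn}$.
\end{itemize}
Together these give the increment $rD(g)$. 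Without an invariant of this kind, your geometric-series argument has nothing to run on.

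\textbf{Step 2: the centralizer fact does not suffice.} Knowing that the fixed points of conjugation are exactly $\langle\tau\rangle$ does not exclude a nontrivial orbit whose increment is $\equiv 0 \pmod{rn}$. Such an orbit would contribute $l\,\omega_{rn}^{\mathrm{fmaj}(g)}\neq 0$. What you need is the implication ``increment $\equiv 0 \Rightarrow g\in\langle\tau\rangle$''. The paper gets it in two steps:
\begin{itemize}
\item $0\le D\le n-1$, so $rD\equiv 0\pmod{rn}$ forces $D=0$;
\item a direct combinatorial argument shows $D(g)=0$ forces $\#\mathrm{cDes}(g)=\chi_S(g)=1$, which pins $g$ down as $\tau^m$.
\end{itemize}

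\textbf{Two smaller points.}
\begin{itemize}
\item $\tau^{-1}g$ raises values by one cyclically, sending $n^c\mapsto 1^{c-1}$; it does not lower them.
\item In Step 3, ``all cycles share a color'' together with ``total color $\equiv i$'' only gives $(n/k)\,c\equiv (n/k)\,j \pmod r$. This does not determine $c$ when $\gcd(n/k,r)>1$. Use instead $(\tau^i)^k=\tau^{jn}=[1^j,\dots,n^j]$, or count the colors in each cycle as the paper does.
\end{itemize}
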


In particular, for the classical Weyl group of type $B_n=\mathbb{Z}_2\wr S_n$ the following holds.

\begin{corollary}
    Let \(C\) be a conjugacy class of $B_n$. Then
    \[
        \sum_{g\in C}\omega_{2n}^{\,\mathrm{fmaj}(g)} \;=\;
        \begin{cases}
        (-1)^{c} \,\mu(k), & \text{if } C = k_{c}^{\,n/k} \text{ and } k \text{ is odd},\\[4pt]
        0, & \text{otherwise},
        \end{cases}
    \]
    where \(k_{c}^{\,n/k}\) denotes the conjugacy class in which every cycle has length \(k\) and color \(c\).  
\end{corollary}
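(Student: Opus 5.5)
The statement to prove is the Corollary for $B_n$. It follows by specializing Theorem~\ref{thm:class-sum-formula} to $r=2$, so the plan is a direct substitution, taking the theorem as given.

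First, set $r=2$, so that $\mathbb{Z}_2\wr S_n=B_n$ and $\omega_{rn}=\omega_{2n}$. The conjugacy classes $C$ and the statistic $\mathrm{fmaj}$ are exactly those of Theorem~\ref{thm:class-sum-formula} with $r=2$. The theorem then gives zero unless $C=k_c^{\,n/k}$ with $\gcd(2,k)=1$, and the condition $\gcd(2,k)=1$ is precisely that $k$ is odd. This settles the vanishing case.

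Second, in the nonvanishing case the theorem gives the value $\omega_2^{c/k}\mu(k)$, where $c/k=c\,k^{-1}\in\mathbb{Z}_2$. Since $k$ is odd, $k\equiv 1\pmod 2$, so $k^{-1}=1$ in $\mathbb{Z}_2$ and $c/k=c$. Also $\omega_2=e^{\pi i}=-1$. Hence $\omega_2^{c/k}=(-1)^c$, and the value is $(-1)^c\mu(k)$, as claimed.

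There is no real obstacle here. The only point needing care is that $(-1)^c$ is well defined for $c\in\mathbb{Z}_2$, which holds because $(-1)^2=1$; this is consistent with the paper's convention of using standard representatives $c\in\{0,1\}$. All the substantive work lies in Theorem~\ref{thm:class-sum-formula} itself, which would presumably be proved by the same argument as in Section~3: pair orbits of conjugation by $\tau$ with the cyclic-descent count, so that nontrivial orbits cancel and the sum reduces to $C\cap\langle\tau\rangle$.
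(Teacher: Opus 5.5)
Your proposal is correct and matches the paper, which states the corollary as the direct specialization $r=2$ of Theorem~\ref{thm:class-sum-formula}: $\gcd(2,k)=1$ means $k$ is odd, and $\omega_2^{c/k}=(-1)^c$ because $k^{-1}=1$ in $\mathbb{Z}_2$. Your closing aside is slightly off, though it does not affect the corollary's proof: for $r\ge 2$ the cyclic descent number is no longer invariant under conjugation by $\tau$, so the paper uses the corrected statistic $D=\#\mathrm{cDes}-\chi_S$ instead.
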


In the symmetric group case, the number of cyclic descents is invariant under
conjugation by the Coxeter element. However, in the wreath product
$\mathbb{Z}_{r} \wr S_{n}$ this invariance no longer holds. The goal of the
next definitions is to isolate a statistic that replaces the cyclic descent
number in this setting, and that remains invariant under conjugation by the
Coxeter element. To this end we introduce the set $S$, the
indicator $\chi_{S}$, and the corrected descent statistic $D$, which will play
a central role in what follows.

\begin{definition}
For a colored number \(a^c\), write
\[
(a^c)^+ := a^{c+1 \bmod r}.
\]
Define the set
\[
S := \{g\in\mathbb{Z}_r\wr S_n :
g(1)\in(g(n)^+,g(n))\},
\]
where \((a,b)\) denotes the open cyclic interval from
\(a\) to \(b\), with respect to the order on colored
numbers defined in Subsection~2.3, traversed in the
increasing direction. In particular, the cyclic interval
\(((a^c)^+,a^c)\) contains exactly \(n-1\) colored numbers.
\end{definition}

\begin{definition}
Define the statistic
\[
\chi_{S} : \mathbb{Z}_{r} \wr S_{n} \longrightarrow \{0,1\},
\]
which takes the value \(1\) for permutations belonging to \(S\), and \(0\) otherwise.
\end{definition}

\begin{definition}
Define the statistic
\[
D : \mathbb{Z}_{r} \wr S_{n} \longrightarrow \mathbb{N}
\quad\text{by}\quad
D(g) = \#\mathrm{cDes}(g) - \chi_{S}(g).
\]
\end{definition}
Since \(n\geq 2\), every permutation of \(n\) distinct entries has at least one and at most \(n-1\) cyclic descents. Hence
\[
0\leq D(g)\leq n-1.
\]

Throughout this section, whenever we refer to the element \(\tau\), 
we mean the Coxeter element defined in Subsection 2.7.  
It will serve as the central object in the statements and proofs that follow, 
and so we do not repeat its definition in each proposition.

We now turn to the effect of conjugation by the element $\tau$ on the
statistics just introduced. In the following propositions we show that
$D$ is invariant under this conjugation, and analyze how the flag major
index transforms under the same operation. These results provide the basic
tools needed for the subsequent arguments.

\begin{proposition}\label{prop:tau-inverse}
Let \( g \in \mathbb{Z}_{r} \wr S_{n} \), and let \(\tau\) denote the Coxeter element.
Then
\[
\mathrm{fmaj}(\tau^{-1} g) \equiv \mathrm{fmaj}(g) - 1 \pmod{rn},
\qquad
D(\tau^{-1} g) = D(g).
\]
\end{proposition}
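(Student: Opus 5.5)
The plan is to compute \(\tau^{-1}g\) explicitly in one-line notation. Then I will observe that left multiplication by \(\tau^{-1}\) acts on the \emph{values} of \(g\) by a single map on colored numbers. That map is the cyclic successor with respect to the order of Subsection~2.3.

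From \(\Phi(\tau)\) we have \(\Phi(\tau)e_1=\omega_r e_n\) and \(\Phi(\tau)e_j=e_{j-1}\) for \(j\geq 2\). Hence \(\Phi(\tau^{-1})e_{a}=e_{a+1}\) for \(a<n\), and \(\Phi(\tau^{-1})e_n=\omega_r^{-1}e_1\). Applying this to the columns of \(\Phi(g)\) shows that \(\tau^{-1}g=[\varphi(g(1)),\dots,\varphi(g(n))]\), where
\[
\varphi(a^c)=(a+1)^c \quad (a<n),\qquad \varphi(n^c)=1^{c-1}.
\]
In the order \(1^{r-1}\prec\cdots\prec n^{0}\), each block of color \(c\) is immediately followed by the block of color \(c-1\). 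Thus \(\varphi\) sends every colored number to its immediate successor, and it sends the maximum \(n^0\) to the minimum \(1^{r-1}\). So \(\varphi\) is the rotation of the cyclic order on the \(rn\) colored numbers. I will also check directly that \(\varphi\) commutes with \(x\mapsto x^+\). For \(a<n\) both sides give \((a+1)^{c+1}\). For \(a=n\) we get \(\varphi(n^{c+1})=1^{c}=(1^{c-1})^+\).

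The whole argument depends on the position \(p\) of the entry of absolute value \(n\), and I split into two cases.

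\emph{Case \(c_p\neq 0\).} Here no entry crosses the wrap-around, so \(\varphi\) is order-preserving on the entries of \(g\). Hence \(\mathrm{Des}\) and \(\mathrm{cDes}\) are unchanged. The only color that changes is \(c_p\mapsto c_p-1\), and since \(c_p\neq0\) its standard representative drops by \(1\). So \(\mathrm{fmaj}\) decreases by exactly \(1\).

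\emph{Case \(c_p=0\).} The entry \(n^0\) is the maximum and becomes the minimum \(1^{r-1}\), while all other comparisons are preserved. The cyclic descent at \(p\) (out of the maximum) disappears, and a cyclic descent at \(p-1\) (into the minimum, indices mod \(n\)) appears. So \(\#\mathrm{cDes}\) is preserved. For ordinary descents, \(\mathrm{maj}\) changes by \((p-1)-p=-1\) when \(1<p<n\). When \(p=1\) the change is \(-1\), and when \(p=n\) it is \(+(n-1)\). In every case the change is \(\equiv -1 \pmod n\), so \(r\,\mathrm{maj}\) changes by \(-r \pmod{rn}\). Meanwhile the color representative goes from \(0\) to \(r-1\), contributing \(+(r-1)\). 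The total change is \(-1 \pmod{rn}\).

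This proves the \(\mathrm{fmaj}\) congruence and the invariance of \(\#\mathrm{cDes}\). For \(D\) it remains to show \(\chi_S(\tau^{-1}g)=\chi_S(g)\). Since \(\varphi\) is a rotation of the cyclic order, it maps open cyclic intervals \((x,y)\) onto \((\varphi(x),\varphi(y))\). Since \(\varphi\) commutes with \(^+\),
\[
g(1)\in(g(n)^+,g(n)) \iff \varphi(g(1))\in(\varphi(g(n))^+,\varphi(g(n))).
\]
This is precisely the membership condition for \(\tau^{-1}g\in S\).

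I expect the main obstacle to be the bookkeeping in the wrap case \(c_p=0\): handling the boundary positions \(p=1\) and \(p=n\) for ordinary versus cyclic descents, and tracking the standard-representative convention for colors. The key identification that makes everything else mechanical is that \(\varphi\) is the cyclic successor map.
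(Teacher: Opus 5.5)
Your proposal is correct and follows essentially the same route as the paper. Both arguments identify left multiplication by \(\tau^{-1}\) with the cyclic-successor map on colored numbers, split on whether the entry \(n^{c}\) has \(c=0\), and deduce that \(\chi_S\) is invariant because this map preserves cyclic intervals and commutes with \({}^+\); your derivation via \(\Phi\), the explicit handling of \(p=1\) and \(p=n\), and the check of commutation with \({}^+\) only spell out steps the paper states without detail.
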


\begin{proof}
Write
\[
g=[g(1),\ldots,g(n)].
\]
Recall that
\[
\tau^{-1}=[2^0,3^0,\ldots,n^0,1^{r-1}].
\]
By the group multiplication rule, left multiplication by
\(\tau^{-1}\) acts on each colored entry of \(g\) by
applying the map
\[
T(a^c)=
\begin{cases}
(a+1)^c, & a<n,\\
1^{c-1\pmod r}, & a=n.
\end{cases}
\]
Thus,
\[
\tau^{-1}g=[T(g(1)),\ldots,T(g(n))].
\]
Observe that \(T\) acts as a cyclic shift by one on
the ordered set of colored numbers. Hence it preserves
all comparisons, except those involving \(n^0\),
the maximal colored number.

Let \(i\) be the unique position such that
\(g(i)=n^c\). We examine separately the changes
in the major index and in the sum of colors.

\medskip
\noindent
\textbf{Case 1.} Suppose \(c\neq 0\).
Then \(n^0\) does not appear in \(g\), so all
comparisons between consecutive entries are preserved.
Hence
\[
\mathrm{maj}(\tau^{-1}g)=\mathrm{maj}(g),
\qquad
\#\mathrm{cDes}(\tau^{-1}g)=\#\mathrm{cDes}(g).
\]
Moreover, the color of \(g(i)\) decreases by \(1\),
while all other colors remain unchanged. Therefore,
\[
\mathrm{fmaj}(\tau^{-1}g)=\mathrm{fmaj}(g)-1.
\]

\medskip
\noindent
\textbf{Case 2.} Suppose \(c=0\).
Then \(g(i)=n^0\), the maximal colored number,
and \(T(g(i))=1^{r-1}\), the minimal colored number.
Thus, the cyclic descent at position \(i\) disappears,
while a cyclic descent appears at position \(i-1\)
(modulo \(n\)). All other cyclic descents are preserved.
Consequently,
\[
\mathrm{maj}(\tau^{-1}g)
\equiv\mathrm{maj}(g)-1\pmod n,
\qquad
\#\mathrm{cDes}(\tau^{-1}g)=\#\mathrm{cDes}(g).
\]
The color of \(g(i)\) increases from \(0\) to \(r-1\),
while all other colors remain unchanged. Hence
\[
\begin{aligned}
\mathrm{fmaj}(\tau^{-1}g)
&\equiv\mathrm{fmaj}(g)-r+(r-1)\\
&\equiv\mathrm{fmaj}(g)-1\pmod{rn}.
\end{aligned}
\]

\medskip
In both cases, we conclude that
\[
\mathrm{fmaj}(\tau^{-1}g)
\equiv\mathrm{fmaj}(g)-1\pmod{rn},
\qquad
\#\mathrm{cDes}(\tau^{-1}g)=\#\mathrm{cDes}(g).
\]

It remains to show that \(\chi_S\) is invariant.
Since \(T\) preserves cyclic intervals and satisfies
\[
T\bigl((a^c)^+\bigr)=\bigl(T(a^c)\bigr)^+,
\]
we have
\[
\begin{aligned}
g\in S
&\iff g(1)\in(g(n)^+,g(n))\\
&\iff T(g(1))
   \in(T(g(n)^+),T(g(n)))\\
&\iff T(g(1))
   \in(T(g(n))^+,T(g(n)))\\
&\iff \tau^{-1}g\in S.
\end{aligned}
\]
Hence \(\chi_S(\tau^{-1}g)=\chi_S(g)\), and consequently
\[
D(\tau^{-1}g)=D(g).
\]
\end{proof}

\begin{proposition}\label{prop:right-tau}
Let \(g\in\mathbb{Z}_r\wr S_n\), and let
\(\tau\) denote the Coxeter element.
Then
\[
D(g\tau)=D(g).
\]
\end{proposition}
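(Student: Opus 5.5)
The plan is to compute $g\tau$ explicitly and then compare the cyclic sequences of $g$ and $g\tau$ locally, exactly as was done for $\tau^{-1}g$ in Proposition~\ref{prop:tau-inverse}. Using the multiplication rule $c''_j=c'_j+c_{\sigma'(j)}$ with $h=\tau$ (so $\sigma'(1)=n$, $c'_1=1$, and $\sigma'(j)=j-1$, $c'_j=0$ for $j\ge 2$), one gets
\[
g\tau=[\,g(n)^+,\,g(1),\,g(2),\,\dots,\,g(n-1)\,].
\]
Thus right multiplication by $\tau$ rotates the word cyclically by one position and replaces the entry $x:=g(n)$ by $x^+$. Write $a:=g(n-1)$ and $b:=g(1)$. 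As cyclic words, $g$ and $g\tau$ agree except that $x$ is replaced by $x^+$. Hence the only cyclic pairs that can change their descent status are $(a,x)\mapsto(a,x^+)$ and $(x,b)\mapsto(x^+,b)$. The remaining pairs merely move one position to the right and keep their status.

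Next I would translate the two indicators into the same language. By definition, $\chi_S(g)=1$ iff $b\in(x^+,x)$. For $g\tau$ we have first entry $x^+$ and last entry $a$, so $\chi_S(g\tau)=1$ iff $x^+\in(a^+,a)$. The key observation is that $(a^+,a)$ consists of the $n-1$ colored numbers lying cyclically strictly between $a^+$ and $a$. Moreover, $a^+$ is exactly $n$ steps below $a$ in the cyclic order on the $rn$ colored numbers. Hence $x^+\in(a^+,a)$ iff $a\in(x^+,x)$. So both indicators ask whether one of the two neighbours of $x$ lies in the window $W:=(x^+,x)$. This window consists of the $n-1$ colored numbers strictly between $x^+$ and $x$; it is $W$ that $x$ ``jumps over'' when it is replaced by $x^+$.

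The claim then reduces to the identity
\[
\#\mathrm{cDes}(g\tau)-\#\mathrm{cDes}(g)=\mathbf 1[a\in W]-\mathbf 1[b\in W].
\]
I would prove it by checking the two changed pairs separately. If $x$ is not $n^{r-1}$-colored in a way that makes the shift wrap (that is, if $x^+\prec x$ in the linear order), then $x$ moves down past exactly the elements of $W$. In that case $(a,x)$ becomes a descent iff $a\in W$, which gives $+\mathbf 1[a\in W]$, and $(x,b)$ stops being a descent iff $b\in W$, which gives $-\mathbf 1[b\in W]$. When the shift wraps (color of $x$ equal to $r-1$, so $x^+$ is large), I would check the same identity directly. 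In that case $W$ straddles the boundary between $n^0$ and $1^{r-1}$, and a descent relation is flipped precisely when the neighbour is not in $W$. The two flips then contribute $+1$ and $-1$ in a way that again equals $\mathbf 1[a\in W]-\mathbf 1[b\in W]$.

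I expect this wrap-around case to be the main obstacle. The cleanest way through is probably to argue purely cyclically. The number of cyclic descents of a cyclic word on distinct elements of the linearly ordered set equals the number of times the word's cyclic walk crosses the ``cut'' between $n^0$ and $1^{r-1}$. Moving $x$ down by $n$ steps changes the crossing counts of its two incident arcs in a way that depends only on whether $a,b\in W$. This handles both cases uniformly. Combining the identity with the indicator computation gives $D(g\tau)=D(g)$.
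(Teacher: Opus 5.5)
Your proposal is correct and follows the paper's proof essentially step for step. Both use the explicit form $g\tau=[g(n)^+,g(1),\dots,g(n-1)]$, reduce $\chi_S(g\tau)$ to $\mathbf{1}[g(n-1)\in(g(n)^+,g(n))]$ because $u^+$ lies exactly $n$ steps before $u$ cyclically, and split into the two cases $g(n)^+\prec g(n)$ and $g(n)\prec g(n)^+$. Your closing cut-crossing (winding number) remark is a valid way to handle both cases at once, but it adds nothing you need beyond the case-by-case check you already outline.
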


\begin{proof}
Write
\[
g=[g(1),\ldots,g(n)].
\]
Right multiplication by \(\tau\) cyclically rotates
the entries and increases the color of the first
entry by \(1\) modulo \(r\):
\[
g\tau=[g(n)^+,g(1),\ldots,g(n-1)].
\]

Let
\[
I=(g(n)^+,g(n))
\]
be the open cyclic interval with respect to the
order on colored numbers defined in Subsection~2.3.
By definition,
\[
\chi_S(g)=\mathbf{1}_{\{g(1)\in I\}}.
\]

Observe that for any colored number \(u\), the
element \(u^+\) lies exactly \(n\) positions before
\(u\) in the increasing cyclic order.
Consequently, for any two colored numbers \(u,v\),
\[
v\in(u^+,u)
\iff
u^+\in(v^+,v).
\]
Indeed, the first condition means that the forward
cyclic distance from \(v\) to \(u\) is between
\(1\) and \(n-1\). Shifting both elements \(n\)
positions backward preserves this distance, so
\(u^+\) lies among the \(n-1\) elements immediately
following \(v^+\), which is precisely the interval
\((v^+,v)\).

Therefore,
\[
\begin{aligned}
\chi_S(g\tau)=1
&\iff g(n)^+\in(g(n-1)^+,g(n-1))\\
&\iff g(n-1)\in(g(n)^+,g(n))\\
&\iff g(n-1)\in I.
\end{aligned}
\]

Right multiplication by \(\tau\) shifts all cyclic
descent positions forward by one, except possibly
those involving \(g(n)\).
More precisely, the descent conditions at positions
\(n-1\) and \(n\), determined by whether
\[
g(n-1)\succ g(n)
\quad\text{and}\quad
g(n)\succ g(1),
\]
are replaced by the conditions at positions
\(n\) and \(1\), determined by whether
\[
g(n-1)\succ g(n)^+
\quad\text{and}\quad
g(n)^+\succ g(1),
\]
respectively.

We distinguish two cases.

\medskip
\noindent\textbf{Case 1.} \(g(n)^+\prec g(n)\).

The comparison involving \(g(n-1)\) gains a descent
precisely when \(g(n-1)\in I\), while the comparison
involving \(g(1)\) loses a descent precisely when
\(g(1)\in I\). Therefore
\[
\begin{aligned}
&\#\mathrm{cDes}(g\tau)-\#\mathrm{cDes}(g)\\
&\qquad=
\mathbf{1}_{\{g(n-1)\in I\}}
-\mathbf{1}_{\{g(1)\in I\}}\\
&\qquad=\chi_S(g\tau)-\chi_S(g).
\end{aligned}
\]

\medskip
\noindent\textbf{Case 2.} \(g(n)\prec g(n)^+\).

The comparison involving \(g(n-1)\) loses a descent
precisely when \(g(n-1)\notin I\), while the
comparison involving \(g(1)\) gains a descent
precisely when \(g(1)\notin I\). Therefore,
\[
\begin{aligned}
&\#\mathrm{cDes}(g\tau)-\#\mathrm{cDes}(g)\\
&\qquad=
-\mathbf{1}_{\{g(n-1)\notin I\}}
+\mathbf{1}_{\{g(1)\notin I\}}\\
&\qquad=
\mathbf{1}_{\{g(n-1)\in I\}}
-\mathbf{1}_{\{g(1)\in I\}}\\
&\qquad=\chi_S(g\tau)-\chi_S(g).
\end{aligned}
\]

Thus, in both cases,
\[
\#\mathrm{cDes}(g\tau)-\chi_S(g\tau)
=
\#\mathrm{cDes}(g)-\chi_S(g).
\]
By the definition of \(D\),
\[
D(g\tau)=D(g).
\]
\end{proof}

\begin{proposition}\label{prop:fmaj-right-tau}
Let \( g \in \mathbb{Z}_{r} \wr S_{n} \), and let \(\tau\) denote the Coxeter element.  
Then
\[
\mathrm{fmaj}(g \tau) \equiv \mathrm{fmaj}(g) + r \cdot D(g) + 1 \pmod{rn}.
\]
\end{proposition}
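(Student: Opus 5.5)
We compute $\mathrm{fmaj}(g\tau)$ directly from the one-line form
\[
g\tau=[g(n)^+,g(1),\ldots,g(n-1)]
\]
used in the proof of Proposition~\ref{prop:right-tau}. The plan is to track separately the change in the color sum and the change in $\mathrm{maj}$ modulo $n$, and then assemble them modulo $rn$.

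\textbf{Colors.} Only the entry $g(n)$ changes color, from $c_n$ to $c_n+1 \bmod r$. Hence the color sum increases by $1$ if $c_n\neq r-1$ and decreases by $r-1$ if $c_n=r-1$. Modulo $rn$, the color sum therefore changes by $1-r\cdot\mathbf 1_{\{c_n=r-1\}}$.

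\textbf{Major index.} Since $g(n+1)=g(1)$ cyclically, we work with cyclic descents and use $\mathrm{maj}\equiv\sum_{i\in\mathrm{cDes}}i \pmod n$; the position-$n$ term contributes $0$. The cyclic descent set of $g\tau$ is obtained from that of $g$ as follows.
\begin{itemize}
\item Every cyclic descent at a position $i\in\{1,\ldots,n-2\}$ moves to position $i+1$.
\item The comparisons at positions $n-1$ and $n$ of $g$ are replaced by the comparisons $g(n-1)$ vs.\ $g(n)^+$ at position $n$, and $g(n)^+$ vs.\ $g(1)$ at position $1$.
\end{itemize}
If nothing but the shift occurred, the cyclic major index would increase by $\#\mathrm{cDes}(g)$ modulo $n$. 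The correction comes only from the two altered comparisons. The new position-$n$ descent contributes $0$ modulo $n$, and the old position-$(n-1)$ descent would have shifted to position $n$, also contributing $0$. So the only net effect to account for is at position $1$: we must compare the new indicator $[g(n)^+\succ g(1)]$ with the old shifted indicator $[g(n)\succ g(1)]$.

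Following the case split of Proposition~\ref{prop:right-tau}, with $I=(g(n)^+,g(n))$, these two indicators differ exactly by $-\chi_S(g)$ in Case~1 ($g(n)^+\prec g(n)$). Thus in Case~1,
\[
\mathrm{maj}(g\tau)\equiv \mathrm{maj}(g)+\#\mathrm{cDes}(g)-\chi_S(g)=\mathrm{maj}(g)+D(g)\pmod n,
\]
and the colors change by exactly $+1$, since $c_n\neq r-1$ in this case. Multiplying the $\mathrm{maj}$ congruence by $r$ and adding gives the claim.

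Case~2 ($g(n)\prec g(n)^+$) happens precisely when $c_n=r-1$, i.e.\ $g(n)^+$ has color $0$. Here the position-$1$ indicator gains $\mathbf 1_{\{g(1)\notin I\}}=1-\chi_S(g)$, so
\[
\mathrm{maj}(g\tau)\equiv \mathrm{maj}(g)+D(g)+1\pmod n.
\]
The extra $r\cdot 1$ obtained after multiplying by $r$ exactly cancels the $-r$ in the color change, giving $\mathrm{fmaj}(g)+rD(g)+1$ again.

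The main obstacle is the bookkeeping at the wrap-around positions: confirming that the shifted descent landing at position $n$ (or coming from it) contributes $0$ modulo $n$, and that descents versus cyclic descents agree modulo $n$. We will verify this carefully via the identity $\mathrm{maj}\equiv\sum_{i\in\mathrm{cDes}} i \pmod n$, which holds because position $n$ contributes $n\equiv 0$.
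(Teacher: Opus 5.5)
Your proof is correct and follows essentially the same route as the paper: the paper makes your ``pure shift'' explicit by factoring $\tau=\tau_1\tau_2$ into a color-free rotation (contributing $r\cdot\#\mathrm{cDes}(g)$) followed by a color bump at position~$1$, and then checks the same four subcases at position~$1$. The only difference is that the paper organizes the subcases first by whether $g\in S$, while you organize them by whether $c_n=r-1$; your version makes visible that in Case~2 the extra $+r$ from $\mathrm{maj}$ cancels the $-r$ from the color wrap.
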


\begin{proof}
Write
\[
g = [\sigma(1)^{c_{1}}, \dots, \sigma(n)^{c_{n}}] \in \mathbb{Z}_{r} \wr S_{n}.
\]
Factor \(\tau\) as \(\tau = \tau_{1}\tau_{2}\), where
\[
\tau_{1} = [n^{0}, 1^{0}, \dots, (n-1)^{0}],\qquad
\tau_{2} = [1^{1}, 2^{0}, \dots, n^{0}].
\]
Thus
\[
g \tau = (g \tau_{1}) \tau_{2}.
\]

\paragraph{Step 1: Effect of right multiplication by \(\tau_{1}\).} 
Right multiplication by $\tau_1$ cyclically rotates
the one-line notation without changing the colors:
\[
g\tau_1=
[\sigma(n)^{c_n},\sigma(1)^{c_1},
\ldots,\sigma(n-1)^{c_{n-1}}].
\]

Thus every cyclic descent moves forward by one
position modulo $n$, while the color sum
remains unchanged.

Since a cyclic descent at position $n$
contributes $rn\equiv 0\pmod{rn}$,
we obtain
\[
\operatorname{fmaj}(g\tau_1)
\equiv
\operatorname{fmaj}(g)
+r\cdot\#\operatorname{cDes}(g)
\pmod{rn}.
\]

\paragraph{Step 2: Effect of right multiplication by \(\tau_{2}\).}
Right multiplication by \(\tau_{2}\) increases the color of the entry in the first position by \(1 \bmod r\), leaving all other entries unchanged:
\[
\begin{aligned}
g\tau_1
&=[g(n),g(1),\ldots,g(n-1)],\\
(g\tau_1)\tau_2
&=[\sigma(n)^{c_n+1\bmod r},
\sigma(1)^{c_1},\ldots,\sigma(n-1)^{c_{n-1}}]\\
&=[g(n)^+,g(1),\ldots,g(n-1)].
\end{aligned}
\]
Thus, the descent status at position \(1\) is
determined by comparing \(g(1)\) with \(g(n)\)
and \(g(n)^+\), respectively.

Since a cyclic descent at position \(n\) contributes
\(rn\equiv 0\pmod{rn}\), any change in its status
does not affect \(\mathrm{fmaj}\pmod{rn}\).
Therefore, it suffices to analyze the descent status
at position \(1\), together with the color change
of the first entry.

We distinguish two cases according to whether \(g\in S\). 

\medskip
\noindent\textbf{Case 1.} \(g\in S\).

If \(g(n)^+\prec g(n)\), then
\[
g(n)^+\prec g(1)\prec g(n).
\]
Thus, position \(1\) is a descent of \(g\tau_1\)
but not of \((g\tau_1)\tau_2\), while the color sum
increases by \(1\).

If \(g(n)\prec g(n)^+\), then \(g(1)\) is either
smaller than both or larger than both. Hence the
descent status at position \(1\) is unchanged,
while the color sum decreases by \(r-1\).

In either case,
\[
\mathrm{fmaj}\bigl((g\tau_1)\tau_2\bigr)
\equiv \mathrm{fmaj}(g\tau_1)-r+1
\pmod{rn}.
\]

\medskip
\noindent\textbf{Case 2.} \(g\notin S\).

If \(g(n)^+\prec g(n)\), then \(g(1)\) is either
smaller than both or larger than both. Hence the
descent status at position \(1\) is unchanged,
while the color sum increases by \(1\).

If \(g(n)\prec g(n)^+\), then
\[
g(n)\prec g(1)\prec g(n)^+.
\]
Thus, position \(1\) is a descent of
\((g\tau_1)\tau_2\) but not of \(g\tau_1\),
while the color sum decreases by \(r-1\).

In either case,
\[
\mathrm{fmaj}\bigl((g\tau_1)\tau_2\bigr)
\equiv \mathrm{fmaj}(g\tau_1)+1
\pmod{rn}.
\]

\medskip
Combining both cases, we obtain
\[
\mathrm{fmaj}(g\tau)
\equiv \mathrm{fmaj}(g\tau_1)
-r\chi_S(g)+1
\pmod{rn}.
\]
Therefore, by Step~1,
\[
\begin{aligned}
\mathrm{fmaj}(g\tau)
&\equiv \mathrm{fmaj}(g)
+r\cdot\#\mathrm{cDes}(g)-r\chi_S(g)+1\\
&\equiv \mathrm{fmaj}(g)+r\cdot D(g)+1
\pmod{rn}.
\end{aligned}
\]
This completes the proof.
\end{proof}

Combining Propositions~\ref{prop:tau-inverse},
\ref{prop:right-tau}, and \ref{prop:fmaj-right-tau},
we obtain, for every \(g\in\mathbb{Z}_r\wr S_n\),
\[
D(\tau^{-1}g\tau)=D(g),
\]
and
\[
\mathrm{fmaj}(\tau^{-1}g\tau)
\equiv \mathrm{fmaj}(g)+r\cdot D(g)
\pmod{rn}.
\]
Thus, \(D\) is invariant under conjugation by \(\tau\),
and the change in the flag-major index is determined
by this invariant.

In the next two propositions we characterize
the elements for which \(D(g)=0\) and describe
the distribution of powers of \(\tau\) among the
conjugacy classes of \(\mathbb{Z}_r\wr S_n\).

\begin{proposition}\label{prop:D-zero-iff-power}
Let \( g \in \mathbb{Z}_{r}\wr S_{n} \).  
Then \( D(g) = 0 \) if and only if \( g \) is a power of the Coxeter element \(\tau\).
\end{proposition}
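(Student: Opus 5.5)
The plan is to unpack the condition \(D(g)=0\) and then use the invariance results already proved to reduce to a normal form. Since \(n\ge 2\), every \(g\) has \(\#\mathrm{cDes}(g)\ge 1\), and \(\chi_S(g)\le 1\). Hence \(D(g)=0\) holds exactly when \(\#\mathrm{cDes}(g)=1\) and \(g\in S\).

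For the ``if'' direction, I will first check the identity \(e=[1^0,\dots,n^0]\) directly. It has a single cyclic descent, at position \(n\), because \(n^0\succ 1^0\). Moreover \(e(n)^+=n^1\), and the interval \((n^1,n^0)\) consists of \(1^0,\dots,(n-1)^0\), so it contains \(e(1)=1^0\). Thus \(e\in S\) and \(D(e)=0\). By Proposition~\ref{prop:right-tau}, right multiplication by \(\tau\) preserves \(D\), so \(D(\tau^m)=D(e\,\tau^m)=0\) for every \(m\).

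For the ``only if'' direction, let \(D(g)=0\).
\begin{enumerate}
\item Since \(\#\mathrm{cDes}(g)=1\), the one-line word of \(g\) is a cyclic rotation of a word that is strictly increasing in \(\prec\).
\item Right multiplication by \(\tau\) moves cyclic descents forward by one position and preserves \(D\) by Proposition~\ref{prop:right-tau}. I will therefore replace \(g\) by \(h=g\tau^{j}\), choosing \(j\) so that the unique cyclic descent of \(h\) sits at position \(n\). Then \(h(1)\prec h(2)\prec\cdots\prec h(n)\), and still \(D(h)=0\), so \(h\in S\).
\item Next I show that the entries of \(h\) form \(n\) consecutive colored numbers in the cyclic order. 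The cyclic interval \((h(n)^+,h(n)]\) consists of exactly the \(n\) colored numbers cyclically preceding and including \(h(n)\), and it contains \(h(1)\). Since \(h(1)\prec h(n)\) linearly, the forward cyclic walk from \(h(1)\) to \(h(n)\) does not pass the wrap point \(n^0\to 1^{r-1}\). So this walk coincides with the linear interval \([h(1),h(n)]\), and it has at most \(n\) elements. All \(n\) distinct entries of \(h\) lie in it, so they are exactly its \(n\) consecutive elements, listed in increasing order.
\item The colored numbers are ordered in color blocks of size \(n\), so any \(n\) cyclically consecutive ones have distinct underlying values. Because the shift \(T\) from the proof of Proposition~\ref{prop:tau-inverse} is the cyclic successor map, this gives \(h=[T^m(1^0),\dots,T^m(n^0)]=T^m(e)\) for some \(m\).
\item By the computation in Proposition~\ref{prop:tau-inverse}, \(T^m(e)=\tau^{-m}e=\tau^{-m}\). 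Hence \(g=\tau^{-m-j}\) is a power of \(\tau\).
\end{enumerate}

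The main obstacle is step 3. There one must argue carefully that the membership \(h(1)\in(h(n)^+,h(n))\) forces the increasing run to fill a window of exactly \(n\) consecutive colored numbers, rather than skipping elements. The point is that the window has length exactly \(n\) and, since \(h(1)\prec h(n)\), the run does not wrap around. I would handle this by measuring forward cyclic distances, as in the proof of Proposition~\ref{prop:right-tau}.
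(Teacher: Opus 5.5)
Your step~2 rests on the claim that right multiplication by $\tau$ moves cyclic descents forward by one position. That holds in $S_n$ (Section~3), but it is false in $\mathbb{Z}_r\wr S_n$. Since $g\tau=[g(n)^+,g(1),\dots,g(n-1)]$, the comparisons at positions $n$ and $1$ of $g\tau$ involve $g(n)^+$ rather than $g(n)$; this is exactly the caveat in Proposition~\ref{prop:right-tau}. For instance, $e$ and $e\tau=\tau$ both have their unique cyclic descent at position $n$.

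You only need the weaker fact that, when $D(g)=0$, a unique descent at $p\le n-1$ moves to $p+1$. For $p\le n-2$ this does follow: positions $2,\dots,n-1$ of $g\tau$ repeat the comparisons at positions $1,\dots,n-2$ of $g$, and $D(g\tau)=0$ forces uniqueness. The transition from $n-1$ to $n$, however, is not covered by anything you cite. Without it, nothing prevents the descent from jumping from $n-1$ to $1$ and cycling through $1,\dots,n-1$ without ever reaching $n$. This step genuinely depends on $g\in S$: for $r=n=2$, $g=[1^0,2^1]$ and $g\tau=[2^0,1^0]$ both have their only cyclic descent at position $1$, and here $\chi_S(g)=0$.

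The repair is short. Suppose $D(g)=0$ and the unique descent is at $n-1$, so $g(n)\prec g(1)\prec\cdots\prec g(n-1)$. The membership $g(1)\in(g(n)^+,g(n))$, together with $g(1)\succ g(n)$, forces that cyclic interval to wrap past $n^0$ with $g(1)$ in its upper part. Hence $g(n)^+\prec g(1)\preceq g(n-1)$, and the descent of $g\tau$ sits at position $n$.

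With this repair the rest of your argument is correct. The window argument in step~3 is sound. Identifying $h$ with $T^m(e)=\tau^{-m}$ via the entrywise shift $T$ is a tidy alternative to the paper's explicit matching of one-line forms. Your ``if'' direction via $D(e)=0$ is equivalent to the paper's check of $D(\tau)=0$.

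Note, though, that the repair is itself a small instance of the window argument, so the normalization buys little. The paper skips it and allows the single cyclic descent to sit at any position. The forward cyclic walk starting at $g(1)$ then meets $g(2),\dots,g(n)$ in this order before returning to $g(1)$. The condition $g(1)\in(g(n)^+,g(n))$ says the walk reaches $g(n)$ within $n-1$ steps, so the entries fill the window $(g(n)^+,g(n)]$. Writing $g(n)=a^c$ then gives $g=\tau^{nc+(n-a)}$.
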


\begin{proof}
(\(\Rightarrow\)) Assume \(D(g)=0\).  
By definition, \(D(g)=\#\mathrm{cDes}(g)-\chi_{S}(g)\), hence 
\(\#\mathrm{cDes}(g)=\chi_{S}(g)\).  
Since \(\chi_{S}\) takes only the values \(0\) or \(1\), and every permutation has at least one cyclic descent, it follows that 
\[
\#\mathrm{cDes}(g)=\chi_{S}(g)=1.
\]
In particular,
\[
g(1)\in(g(n)^+,g(n)).
\]

Let \(p\) be the unique cyclic descent of \(g\). If \(p=n\), then
\[
g(1)\prec g(2)\prec\cdots\prec g(n).
\]
If \(p<n\), then all other cyclic positions are not descents, and therefore
\[
g(p+1)\prec\cdots\prec g(n)\prec
g(1)\prec\cdots\prec g(p).
\]

Thus, in either case, the entries
\(g(1),\ldots,g(n)\) follow the increasing
cyclic order from \(g(1)\) to \(g(n)\), with at
most one wrap around the linear order.
Since
\[
g(1)\in(g(n)^+,g(n)),
\]
it follows that every entry of \(g\) lies in
the half-open cyclic interval
\[
(g(n)^+,g(n)].
\]

As this interval contains exactly \(n\) colored letters, the entries of
\(g\) occupy it entirely. Moreover, since they occur in their cyclic
order, their order in the one-line notation is uniquely determined.

Write \(g(n)=a^c\), with
\(c\in\{0,\ldots,r-1\}\).
Then the one-line notation of \(g\) is
\[
g=\bigl[(a+1)^{\,c+1\!\!\!\pmod r},\,
(a+2)^{\,c+1\!\!\!\pmod r},\dots,\,
n^{\,c+1\!\!\!\pmod r},\,
1^{\,c},\,2^{\,c},\dots,\,a^{\,c}\bigr].
\]
Here the first block is omitted when \(a=n\).

This is precisely the element \(\tau^{\,m}\) with exponent
\[
m = n \cdot c + (n-a).
\]
Indeed, right multiplication by \(\tau\) rotates the positions by one and increases
(mod \(r\)) the color of the leading entry by \(1\); iterating \(n-a\) times
produces the positional block ending at \(a\), and iterating a further
\(nc\) times raises the colors appropriately. Hence \(g=\tau^{m}\), i.e.,
\(g\) is a power of \(\tau\).

\smallskip
(\(\Leftarrow\)) Suppose \(g=\tau^{m}\) for some \(m\).  
By Proposition~\ref{prop:right-tau}, right multiplication by \(\tau\) leaves \(D\) unchanged, 
so all powers of \(\tau\) have the same value of \(D\).  
A direct computation shows that \(D(\tau)=0\), and
Therefore \(D(g)=0\).
\end{proof}

\begin{proposition}\label{prop:powers-of-tau}
Let \( i \in [0,rn-1] \), and let \(\tau\) denote the Coxeter element.  
Set \( d=\gcd(i,n) \), \( k=n/d \), and \( c \equiv i/d \pmod{r} \).  
Then \(\tau^{i}\) lies in the conjugacy class \(k_{c}^{\,n/k}\).  
Moreover,
\[
\mathrm{fmaj}(\tau^{i}) \equiv i \pmod{rn}.
\]
\end{proposition}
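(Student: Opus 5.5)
The plan is to treat the two assertions separately: the conjugacy class via a power-of-$\tau$ computation, and the value of $\mathrm{fmaj}$ via induction on $i$ using the propositions already established.

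\textbf{Conjugacy class.}
\begin{enumerate}
\item The underlying permutation of $\tau$ is the $n$-cycle $\sigma$ with $\sigma(1)=n$ and $\sigma(j)=j-1$. Hence the underlying permutation of $\tau^{i}$ is $\sigma^{i}$.
\item The power $\sigma^{i}$ of an $n$-cycle splits into $d=\gcd(i,n)$ cycles, each of length $k=n/d$. So every cycle of $\tau^{i}$ has length $k$ (for $i\equiv 0 \pmod n$ we get $k=1$, i.e.\ fixed points).
\item To determine the cycle colors, I will use the following fact. If $g$ has all cycles of length $k$, then $g^{k}$ has trivial underlying permutation. Moreover, its color at each position equals the color of the cycle containing that position. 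This is clear from the monomial matrix $\Phi(g)$: $\Phi(g)^k$ is diagonal, and its entry at $j$ is the product of the nonzero entries along the cycle through $j$.
\item Apply this to $g=\tau^{i}$. We get $g^{k}=\tau^{ik}=\tau^{n\cdot(i/d)}$, since $ik=in/d$.
\item The paper notes that $\tau^{n}=[1^1,\dots,n^1]$. This element is central, being the scalar matrix $\omega_r I$. Therefore $\tau^{n\cdot(i/d)}$ is the identity permutation with every color equal to $i/d \bmod r$.
\item Consequently every cycle of $\tau^{i}$ has color $c\equiv i/d\pmod r$, so $\tau^{i}\in k_c^{\,n/k}$.
\end{enumerate}

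\textbf{Flag major index.} I will argue by induction on $i$.
\begin{enumerate}
\item For $i=0$, $\tau^0$ is the identity, which has no descents and all colors $0$. So $\mathrm{fmaj}(\tau^0)=0$.
\item For the inductive step, Proposition~\ref{prop:D-zero-iff-power} gives $D(\tau^{i})=0$.
\item Proposition~\ref{prop:fmaj-right-tau} applied to $g=\tau^{i}$ then yields
\[
\mathrm{fmaj}(\tau^{i+1})\equiv \mathrm{fmaj}(\tau^{i})+r\cdot 0+1\pmod{rn}.
\]
\item Hence $\mathrm{fmaj}(\tau^{i})\equiv i \pmod{rn}$ for all $i$, and in particular for $i\in[0,rn-1]$.
\end{enumerate}

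\textbf{Main obstacle.} The only delicate point is the cycle-color identification in the first part. One must make sure that the color of $g^{k}$ at each position really is the sum of the colors along the corresponding cycle, with the multiplication convention $c''_j=c'_j+c_{\sigma'(j)}$. The matrix picture settles this cleanly. It also handles the degenerate cases $k=1$ (where $\tau^{i}$ is central with all colors equal to $i/n$) and $d=1$ uniformly.
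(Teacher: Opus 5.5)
Your proof is correct, and it takes a different route from the paper.

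\textbf{The paper's route.} The paper writes $i=i'+mn$ with $0\le i'<n$ and asserts the explicit one-line form $\tau^{i}=[(n-i'+1)^{m+1},\dots,n^{m+1},1^{m},\dots,(n-i')^{m}]$ (colors mod $r$). It then reads both claims off this form:
\begin{itemize}
\item Each cycle of the underlying permutation is a residue class mod $d$, so it meets the first block in $i'/d$ positions. This gives cycle color $\frac{i'}{d}(m+1)+\frac{n-i'}{d}m=\frac{i}{d}$.
\item A two-case inspection gives $\mathrm{fmaj}(\tau^i)=i$. If $m\neq r-1$ there are no descents and the color sum is $i$. If $m=r-1$ then $\mathrm{maj}=i'$ and the color sum is $(r-1)(n-i')$.
\end{itemize}

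\textbf{Your route.} You get the cycle colors structurally. You use the fact that $\Phi(g)^k$ is diagonal with the cycle colors on the diagonal, together with $\tau^{ik}=(\tau^n)^{i/d}=\omega_r^{i/d}I$. This avoids both the ``one checks directly'' formula and the residue-class count, and it treats $k=1$ uniformly.

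For $\mathrm{fmaj}$ you reuse the $D$-machinery: induction via Proposition~\ref{prop:fmaj-right-tau} with $D(\tau^i)=0$. This is not circular. The ($\Leftarrow$) direction of Proposition~\ref{prop:D-zero-iff-power} rests only on Proposition~\ref{prop:right-tau} and a direct check that $D(\tau)=0$. You should say so explicitly. Alternatively, note directly that $D(\tau^i)=D(\mathrm{id})=1-1=0$ by iterating Proposition~\ref{prop:right-tau}.

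\textbf{Trade-off.} The paper's computation is self-contained and gives the exact value $\mathrm{fmaj}(\tau^i)=i$ for $0\le i<rn$. It also produces the explicit shape of $\tau^i$, the same shape that appears in the proof of Proposition~\ref{prop:D-zero-iff-power}. Your induction gives only the congruence modulo $rn$. That is exactly what is stated, and all that is used later.
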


\begin{proof}
Since $\tau$ is an $n$-cycle when colors are ignored, the cycle decomposition of $\tau^{i}$ is the same as in $S_{n}$. Therefore, ignoring colors, its cycle structure is
\[
\Bigl(\tfrac{n}{\gcd(i,n)}\Bigr)^{\gcd(i,n)}
\;=\; k^{\,n/k}.
\]

Write \(i=i' + m n\) with \(0 \le i' < n\). Then one checks directly that the one–line notation of \(\tau^{i}\) is
\[
\tau^{i}
=
\bigl[(n-i'+1)^{\,m+1 \bmod r},\,(n-i'+2)^{\,m+1 \bmod r},\dots,\,
n^{\,m+1 \bmod r},\,
1^{\,m},\,2^{\,m},\dots,\,(n-i')^{\,m}\bigr].
\]
Consequently, in $\tau^{i}$ there are exactly $i'$ entries with color \(m+1 \bmod r\), 
and $n-i'$ entries with color $m$.

\medskip

\noindent\textit{Cycle-colors.}
Let $d=\gcd(i,n)$ and $k=n/d$ as above. Each of the $d$ cycles has length $k$.
Indeed, the cycles of the underlying permutation are the residue classes modulo $d$, and since $d\mid i'$, each cycle contains exactly $i'/d$ positions from the first block.
Thus, in each cycle there are $\tfrac{i'}{d}$ entries of color $m+1$ and $\tfrac{n-i'}{d}$ entries of color $m$.
Therefore, the color sum of any cycle is

\[
\frac{i'}{d}\,(m+1) \;+\; \frac{n-i'}{d}\,m
\;\equiv\; \frac{i'+mn}{d}
\;\equiv\; \frac{i}{d}
\;\equiv\; c \pmod{r}.
\]
Hence every cycle has color $c$, that is, $\tau^{i} \in k^{\,n/k}_{c}$.

\medskip
\noindent\textit{Flag-major index.}
We compute $\mathrm{fmaj}(\tau^{i})$ modulo $rn$.

\emph{Case $m\neq r-1$.}
There are no descents, and the color sum equals
\[
(m+1)\,i' + m\,(n-i') \;=\; i' + mn \;=\; i.
\]
Hence $\mathrm{fmaj}(\tau^{i}) \equiv i \pmod{rn}$.

\emph{Case $m = r-1$.} The color sum equals $(r-1)(n-i')$.
If $i'>0$, there is a unique cyclic descent at position $i'$.
If $i'=0$, the unique cyclic descent occurs at position $n$.
In either case, $\operatorname{maj}(\tau^i)=i'$.
Therefore,
\[
\operatorname{fmaj}(\tau^i)
=ri'+(r-1)(n-i')
=i'+(r-1)n=i,
\]
so $\operatorname{fmaj}(\tau^i)\equiv i\pmod{rn}$.

\end{proof}

We first show that the contribution of all nontrivial orbits under the
conjugation action of $\tau$ vanishes. This observation will allow us to
reduce the analysis to the trivial orbits alone.

\begin{proposition}\label{prop:orbit-sum-zero}
Let \(O \subseteq \mathbb{Z}_{r}\wr S_{n}\) be a nontrivial orbit (of length \(l>1\)) 
under conjugation by the Coxeter element \(\tau\). Then
\[
\sum_{g \in O} \omega_{rn}^{\,\mathrm{fmaj}(g)} \;=\; 0.
\]
The same vanishing holds for any choice of primitive \((rn)\)-th root of
unity.
\end{proposition}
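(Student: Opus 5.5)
We mimic the argument of Section~3, with the invariant \(D\) in place of \(\#\mathrm{cDes}-1\) and modulus \(rn\) in place of \(n\). Fix a nontrivial orbit \(O\) of length \(l>1\) and some \(g\in O\). Then \(O=\{\tau^{-i}g\tau^{i}: 0\le i\le l-1\}\), and these \(l\) elements are distinct.

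First, \(D\) is constant on \(O\). By the consequence of Propositions~\ref{prop:tau-inverse}, \ref{prop:right-tau} and \ref{prop:fmaj-right-tau} stated before Proposition~\ref{prop:D-zero-iff-power}, we have \(D(\tau^{-1}h\tau)=D(h)\) for every \(h\). Write \(D:=D(g)\). The same consequence gives \(\mathrm{fmaj}(\tau^{-1}h\tau)\equiv\mathrm{fmaj}(h)+rD(h)\pmod{rn}\). Iterating, we get
\[
\mathrm{fmaj}(\tau^{-i}g\tau^{i})\equiv \mathrm{fmaj}(g)+i\,rD\pmod{rn}
\]
for all \(i\ge 0\). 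Put \(\zeta:=\omega_{rn}^{rD}=\omega_n^{D}\). The orbit sum is then the geometric sum \(\omega_{rn}^{\mathrm{fmaj}(g)}\sum_{i=0}^{l-1}\zeta^{i}\).

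Two facts remain to check: \(\zeta^{l}=1\) and \(\zeta\neq 1\).

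For \(\zeta^l=1\), note that \(\tau^{-l}g\tau^{l}=g\). The displayed congruence with \(i=l\) then gives \(lrD\equiv 0\pmod{rn}\), so \(\zeta^{l}=1\).

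For \(\zeta\neq1\), recall that \(0\le D\le n-1\), so \(\zeta=1\) forces \(D=0\). By Proposition~\ref{prop:D-zero-iff-power}, \(g\) would then be a power of \(\tau\). It would commute with \(\tau\), and its orbit would be trivial, contradicting \(l>1\). Hence \(\zeta\ne1\), and the sum equals \(\omega_{rn}^{\mathrm{fmaj}(g)}(\zeta^{l}-1)/(\zeta-1)=0\).

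For an arbitrary primitive \(rn\)-th root of unity \(\eta=\omega_{rn}^{t}\) with \(\gcd(t,rn)=1\), the same computation gives the ratio \(\eta^{rD}=\omega_n^{tD}\). Its \(l\)-th power is \(1\) because \(n\mid lD\). It is not \(1\) because \(\gcd(t,n)=1\) and \(n\nmid D\).

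The only genuinely nontrivial input is excluding \(D\equiv0\pmod n\) on a nontrivial orbit. That step rests entirely on Proposition~\ref{prop:D-zero-iff-power} together with the bound \(D\le n-1\). Everything else is bookkeeping.
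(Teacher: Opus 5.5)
Your proof is correct and follows essentially the same route as the paper: iterate the congruence $\mathrm{fmaj}(\tau^{-i}g\tau^{i})\equiv\mathrm{fmaj}(g)+i\,r\,D(g)\pmod{rn}$, get $q^{l}=1$ from $\tau^{-l}g\tau^{l}=g$, and get $q\neq 1$ from Proposition~\ref{prop:D-zero-iff-power} together with $0\le D(g)\le n-1$, so the geometric sum vanishes. The paper's proof only works with $\omega_{rn}$ and leaves the clause about an arbitrary primitive $(rn)$-th root implicit, so your explicit check with $\eta=\omega_{rn}^{t}$, $\gcd(t,rn)=1$, is a small addition.
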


\begin{proof}
Fix \(g\in O\). Then
\[
O=\{\tau^{-i} g \tau^{i} : i=0,1,\dots,l-1\}.
\]
By Propositions~\ref{prop:tau-inverse},~\ref{prop:right-tau},~\ref{prop:fmaj-right-tau},
\[
\mathrm{fmaj}(\tau^{-i} g \tau^{i})
\equiv \mathrm{fmaj}(g) + i\cdot r\cdot D(g) \pmod{rn}.
\]
Therefore, letting \(q=\omega_{rn}^{\,r\cdot D(g)}\),
\[
\sum_{x\in O}\omega_{rn}^{\,\mathrm{fmaj}(x)}
=\sum_{i=0}^{l-1}\omega_{rn}^{\,\mathrm{fmaj}(\tau^{-i} g \tau^{i})}
=\omega_{rn}^{\,\mathrm{fmaj}(g)}\sum_{i=0}^{l-1}q^{\,i}.
\]

Since the orbit is nontrivial,
Proposition~\ref{prop:D-zero-iff-power}
implies $D(g)\neq 0$.
Moreover, $0\leq D(g)\leq n-1$,
so $1\leq D(g)\leq n-1$.
Consequently,
$q=\omega_{rn}^{\,r\cdot D(g)}\neq 1$.
Applying the geometric–series formula, we obtain
\[
\sum_{x\in O}\omega_{rn}^{\,\mathrm{fmaj}(x)}
=\omega_{rn}^{\,\mathrm{fmaj}(g)}\cdot\frac{q^{\,l}-1}{q-1}.
\]
It remains to show that \(q^{\,l}=1\).

Since \(\tau^{-l} g \tau^{l}=g\), applying the congruence with \(i=l\) yields
\[
\mathrm{fmaj}(g)\equiv \mathrm{fmaj}(g)+l\cdot r\cdot D(g)\pmod{rn}.
\]
Hence \(l\cdot r\cdot D(g)\equiv 0\pmod{rn}\), so
\[
q^{\,l}=\bigl(\omega_{rn}^{\,r\cdot D(g)}\bigr)^{l}=1.
\]
This completes the proof.
\end{proof}

Having restricted to trivial orbits, we next analyze their structure in
detail. We classify them according to conjugacy classes and compute the
corresponding sums. This leads to explicit evaluations depending on whether
$r$ and $k$ are coprime or not.

\begin{proposition}\label{prop:class-sums-coprime}
Let \(C\) be a conjugacy class of \(\mathbb{Z}_{r}\wr S_{n}\).  
If \(C = k_{c}^{\,n/k}\), i.e. the class in which all cycles have length \(k\) and color \(c\), then
\[
\sum_{g \in C} \omega_{rn}^{\,\mathrm{fmaj}(g)}
= \sum_{j \in J} \omega_{rk}^{\,j},
\]
where
\[
J=\{\,j \in [0,rk-1] : \gcd(j,k)=1,\;\; j \equiv c \pmod{r}\,\}.
\]
Otherwise,
\[
\sum_{g \in C} \omega_{rn}^{\,\mathrm{fmaj}(g)} = 0.
\]
\end{proposition}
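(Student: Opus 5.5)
The plan is to reduce the class sum to the trivial orbits of conjugation by \(\tau\), exactly as in the symmetric group case. The conjugacy class \(C\) is invariant under conjugation by \(\tau\), so it splits into \(\tau\)-orbits. By Proposition~\ref{prop:orbit-sum-zero}, every orbit of length \(l>1\) contributes zero.

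Next I identify the orbits of length one. If \(\tau^{-1}g\tau=g\), then the congruence following Proposition~\ref{prop:fmaj-right-tau} gives \(rD(g)\equiv 0\pmod{rn}\). Since \(0\le D(g)\le n-1\), this forces \(D(g)=0\). Proposition~\ref{prop:D-zero-iff-power} then gives \(g\in\langle\tau\rangle\). Conversely, every power of \(\tau\) commutes with \(\tau\). Since \(\tau\) has order \(rn\), we get
\[
\sum_{g\in C}\omega_{rn}^{\mathrm{fmaj}(g)}=\sum_{\substack{i\in[0,rn-1]\\ \tau^i\in C}}\omega_{rn}^{\mathrm{fmaj}(\tau^i)}=\sum_{\substack{i\in[0,rn-1]\\ \tau^i\in C}}\omega_{rn}^{\,i},
\]
where the last equality uses Proposition~\ref{prop:powers-of-tau}.

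The same proposition shows that each \(\tau^i\) lies in a class of the form \(k_c^{\,n/k}\). Hence if \(C\) is not of this form, the sum is empty and equals \(0\). This settles the ``otherwise'' case.

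Now suppose \(C=k_c^{\,n/k}\). By Proposition~\ref{prop:powers-of-tau}, \(\tau^i\in C\) if and only if \(n/\gcd(i,n)=k\) and \(i/\gcd(i,n)\equiv c\pmod r\). Classes with different \((k,c)\) are distinct, so this characterization is exact.

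The first condition says \(\gcd(i,n)=n/k\). I therefore substitute \(i=j\cdot n/k\) with \(j\in[0,rk-1]\). This is a bijection between \(i\in[0,rn-1]\) that are multiples of \(n/k\) and such \(j\).

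Under this substitution, \(\gcd(i,n)=(n/k)\gcd(j,k)\). So the condition \(\gcd(i,n)=n/k\) becomes \(\gcd(j,k)=1\), and the color condition becomes \(j\equiv c\pmod r\). Finally, \(\omega_{rn}^{i}=\omega_{rn}^{jn/k}=\omega_{rk}^{j}\). This yields exactly \(\sum_{j\in J}\omega_{rk}^j\).

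I expect the main obstacle to be the bookkeeping in this last step, not any conceptual difficulty. The points to check are: that every \(i\) with \(\tau^i\in C\) is a multiple of \(n/k\), so no terms are lost in the substitution; that the range \([0,rn-1]\) maps exactly onto \([0,rk-1]\); and that \(c\equiv i/d\) is read modulo \(r\) with \(d=n/k\), so it becomes precisely \(j\equiv c\pmod r\).

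The edge case \(k=1\) deserves a separate check. There \(\gcd(j,1)=1\) holds automatically, and \(J\) consists of the single element \(j=c\). This matches the sum over \(i=jn\).
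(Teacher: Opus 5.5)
Your proposal is correct and follows essentially the same route as the paper. Like the paper, you discard nontrivial $\tau$-orbits via Proposition~\ref{prop:orbit-sum-zero}, identify the fixed points with $\langle\tau\rangle$ through $rD(g)\equiv 0 \pmod{rn}$ and Proposition~\ref{prop:D-zero-iff-power}, and then reindex by $i=j\cdot n/k$ using Proposition~\ref{prop:powers-of-tau}. The only cosmetic difference is the last step: you phrase it as a bijection between exponents $i\in[0,rn-1]$ and $j\in[0,rk-1]$, whereas the paper proves the set equality $C\cap\langle\tau\rangle=\{\tau^{jn/k}: j\in J\}$ by two inclusions.
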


\begin{proof}
The class \(C\) decomposes into orbits under conjugation by \(\tau\).
By Proposition~\ref{prop:orbit-sum-zero}, the sum of \(\omega_{rn}^{\,\mathrm{fmaj}(g)}\) over any nontrivial orbit vanishes.  
By Propositions~\ref{prop:tau-inverse} and ~\ref{prop:fmaj-right-tau}, every trivial orbit
satisfies $rD(g)\equiv 0\pmod{rn}$.
Since $0\leq D(g)\leq n-1$, this implies $D(g)=0$.
By Proposition~\ref{prop:D-zero-iff-power},
$g\in\langle\tau\rangle$.
Conversely, every power of $\tau$ commutes with $\tau$.
Hence the trivial orbits correspond precisely
to the elements of $\langle\tau\rangle$.  
Hence
\[
\sum_{g \in C}\omega_{rn}^{\,\mathrm{fmaj}(g)}
= \sum_{g \in C \cap \langle \tau\rangle} \omega_{rn}^{\,\mathrm{fmaj}(g)}.
\]

If \(C\) is not of the form \(k_{c}^{\,n/k}\), then Proposition~\ref{prop:powers-of-tau} implies that
\(C \cap \langle \tau\rangle = \varnothing\), and the sum is zero.  

Now assume \(C=k_{c}^{\,n/k}\).  
By Proposition~\ref{prop:powers-of-tau}, \(\tau^{i}\in C\) if and only if
\[
k = \frac{n}{\gcd(i,n)} \quad\text{and}\quad c \equiv \frac{i}{\gcd(i,n)} \pmod{r}.
\]
Setting
\[
j = \frac{i}{\gcd(i,n)} = i \cdot \frac{k}{n},
\]
we have \(j \equiv c \pmod{r}\) and \(\gcd(j,k)=1\).  
Thus
\[
C \cap \langle \tau \rangle \subseteq \{\,\tau^{\,j\cdot n/k} : j \in J\,\}.
\]

For the reverse inclusion, let \(j \in J\).  
Then the cycle structure of \(\tau^{\,j\cdot n/k}\) has length
\[
\frac{n}{\gcd(j\cdot n/k,n)}
= \frac{n}{\gcd(j\cdot n/k,k\cdot n/k)}
= \frac{n}{\tfrac{n}{k}\cdot \gcd(j,k)}
= k,
\]
and the cycle color is
\[
\frac{j\cdot n/k}{\gcd(j\cdot n/k,n)}
= \frac{j\cdot n/k}{\tfrac{n}{k}\cdot \gcd(j,k)}
= j \equiv c \pmod{r}.
\]
Hence $\tau^{\,j\cdot n/k} \in C$.  

\smallskip
By Proposition~\ref{prop:powers-of-tau} we have
\[
\mathrm{fmaj}\!\left(\tau^{\,j\cdot n/k}\right) \equiv j\cdot \tfrac{n}{k} \pmod{rn}.
\]
Moreover,
\[
\omega_{rn}^{\,j\cdot n/k}
= \left(\omega_{rk\cdot (n/k)}^{\,n/k}\right)^{j}
= \omega_{rk}^{\,j}.
\]

Therefore
\[
\sum_{g \in C}\omega_{rn}^{\,\mathrm{fmaj}(g)}
= \sum_{g \in C \cap \langle\tau\rangle}\omega_{rn}^{\,\mathrm{fmaj}(g)}
= \sum_{\tau^{\,j\cdot n/k} :\, j \in J}\omega_{rn}^{\,\mathrm{fmaj}(\tau^{\,j\cdot n/k})}
= \sum_{j \in J}\omega_{rn}^{\,j\cdot n/k}
= \sum_{j \in J}\omega_{rk}^{\,j}.
\]

\end{proof}

\begin{proposition}\label{prop:almostlast}
Let \(C = k_{c}^{\,n/k}\) be a conjugacy class of \(\mathbb{Z}_{r}\wr S_{n}\).  
If \(\gcd(r,k)=1\), then
\[
\sum_{g \in C}\omega_{rn}^{\,\mathrm{fmaj}(g)}
= \omega_{r}^{\,c'} \,\mu(k),
\]

where \(c' = c\,k^{-1}\) in \(\mathbb{Z}_r\), with
\(k^{-1}\) denoting the inverse of \(k\) modulo \(r\).

\end{proposition}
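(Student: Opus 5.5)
By Proposition~\ref{prop:class-sums-coprime}, it suffices to evaluate
\[
\Sigma:=\sum_{j\in J}\omega_{rk}^{\,j},\qquad J=\{\,j\in[0,rk-1]:\gcd(j,k)=1,\ j\equiv c \pmod r\,\},
\]
under the hypothesis $\gcd(r,k)=1$. The plan is to use the Chinese Remainder Theorem to split $\mathbb{Z}_{rk}\cong\mathbb{Z}_r\times\mathbb{Z}_k$, and correspondingly to factor the root of unity $\omega_{rk}^{\,j}$ as a product of an $r$-th root of unity depending only on $j \bmod r$ and a $k$-th root of unity depending only on $j \bmod k$.

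Concretely, since $\gcd(r,k)=1$ we choose integers $u,v$ with $uk+vr=1$. Here $u\equiv k^{-1}\pmod r$ and $v\equiv r^{-1}\pmod k$. Then
\[
\omega_{rk}^{\,j}=\omega_{rk}^{\,j(uk+vr)}=\omega_r^{\,ju}\,\omega_k^{\,jv},
\]
and $\omega_r^{ju}$ depends only on $j\bmod r$, while $\omega_k^{jv}$ depends only on $j\bmod k$. By CRT, the map $j\mapsto(j\bmod r,\,j\bmod k)$ is a bijection from $[0,rk-1]$ onto $\mathbb{Z}_r\times\mathbb{Z}_k$. Under this bijection, $J$ corresponds exactly to $\{c\}\times(\mathbb{Z}_k)^{\times}$. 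Note that the condition $\gcd(j,k)=1$ depends only on $j\bmod k$.

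The sum therefore factors as
\[
\Sigma=\omega_r^{\,cu}\sum_{t\in(\mathbb{Z}_k)^\times}\omega_k^{\,tv}.
\]
Since $v$ is a unit modulo $k$, the substitution $t\mapsto tv$ permutes $(\mathbb{Z}_k)^\times$. The remaining sum is therefore the Ramanujan sum of primitive $k$-th roots of unity, which equals $\mu(k)$. This is the same fact used at the end of the Garsia--Wachs proof in Section~3, and it also covers the case $k=1$. Finally, $\omega_r^{cu}=\omega_r^{c'}$ with $c'=ck^{-1}\in\mathbb{Z}_r$, which is independent of the chosen representative of $c$ and of $u$.

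The only delicate point is keeping the two units straight. The exponent on the $r$-part must come out as $c\cdot k^{-1}$ and not $c\cdot k$. The $k$-part twist by $r^{-1}$ is harmless only because multiplication by a unit permutes primitive roots. I would double-check this on a small example, say $r=2$, $k=3$, to confirm the sign, noting that the normalization $\omega_m=e^{2\pi i/m}$ matters here.
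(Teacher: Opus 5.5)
Your proof is correct and is essentially the paper's argument: both use the Chinese Remainder Theorem on $\mathbb{Z}_{rk}$ to factor $\omega_{rk}^{\,j}$, for $j\in J$, into the fixed factor $\omega_r^{\,c'}$ times a primitive $k$-th root of unity, and the $k$-parts then sum to $\mu(k)$. The only difference is bookkeeping: the paper parametrizes $J$ directly as $j=rm+c'k$ with $\gcd(m,k)=1$, which makes your reindexing by the unit $v\equiv r^{-1}\pmod k$ unnecessary.
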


\begin{proof}
By Proposition ~\ref{prop:class-sums-coprime}, it suffices to evaluate
\[
\sum_{j \in J}\omega_{rk}^{\,j},
\qquad
J=\{\,j \in [0,rk-1] : \gcd(j,k)=1,\; j \equiv c \pmod{r}\,\}.
\]

Since \(\gcd(r,k)=1\), there exists \(c' \in [0,r-1]\) such that
\[
c \equiv c'k \pmod{r}.
\]
Thus every \(j \in J\) can be represented modulo \(rk\) in the form
\[
j = r m + c'k, \qquad m \in [0,k-1],
\]
and the condition \(\gcd(j,k)=1\) is equivalent to \(\gcd(m,k)=1\).  
Hence, modulo \(rk\)
\[
J = \{\, r m + c'k : m \in [0,k-1],\; \gcd(m,k)=1 \,\}.
\]

Therefore,
\[
\sum_{g \in C}\omega_{rn}^{\,\mathrm{fmaj}(g)}
= \sum_{j \in J}\omega_{rk}^{\,j}
= \sum_{\substack{m \in [0,k-1]\\ \gcd(m,k)=1}}\omega_{rk}^{\,r m + c'k}
= \omega_{r}^{\,c'} \sum_{\substack{m \in [0,k-1]\\ \gcd(m,k)=1}} \omega_{k}^{\,m}.
\]

Since the terms are precisely the primitive \(k\)-th roots of unity, their sum equals \(\mu(k)\).
Hence
\[
\sum_{g \in C}\omega_{rn}^{\,\mathrm{fmaj}(g)}
= \omega_{r}^{\,c'} \,\mu(k).
\]

\end{proof}

\begin{proposition}\label{prop:last}
Let \(C = k_{c}^{\,n/k}\) be a conjugacy class of \(\mathbb{Z}_{r}\wr S_{n}\).  
If \(\gcd(r,k) > 1\), then
\[
\sum_{g \in C}\omega_{rn}^{\,\mathrm{fmaj}(g)} = 0.
\]
\end{proposition}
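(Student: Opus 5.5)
By Proposition~\ref{prop:class-sums-coprime}, the class sum equals
\[
\sum_{j\in J}\omega_{rk}^{\,j},
\qquad
J=\{\,j\in[0,rk-1]:\gcd(j,k)=1,\ j\equiv c\pmod r\,\}.
\]
So the task reduces to showing that this sum of roots of unity vanishes when $\gcd(r,k)>1$. My plan is to exhibit a nontrivial root of unity $\zeta$ such that multiplication of the summands by $\zeta$ permutes the set $\{\omega_{rk}^{j}: j\in J\}$. Then $S=\zeta S$, which forces $S=0$.

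First, choose a prime $p$ dividing both $r$ and $k$. Consider the shift $j\mapsto j+rk/p$ modulo $rk$. Since $p\mid k$, the step $rk/p=r\cdot(k/p)$ is a multiple of $r$, so the congruence $j\equiv c\pmod r$ is preserved.

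Next, check that the condition $\gcd(j,k)=1$ is also preserved. Because $p\mid r$, the step $rk/p=(r/p)\cdot k$ is a multiple of $k$. Hence $j+rk/p\equiv j\pmod k$, and therefore $\gcd(j+rk/p,\,k)=\gcd(j,k)$.

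So the shift is a bijection of $J$ onto itself, viewed modulo $rk$. It multiplies each summand $\omega_{rk}^{j}$ by $\zeta=\omega_{rk}^{rk/p}=\omega_p\neq 1$. Thus $S=\omega_p S$, and since $\omega_p\neq1$ we get $S=0$.

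The only delicate point is that both divisibility facts, $p\mid k$ and $p\mid r$, are genuinely needed: the first preserves the residue class modulo $r$, and the second preserves the residue modulo $k$. This is exactly why we pick a common prime factor $p$ rather than an arbitrary divisor of $rk$.

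If $J$ happens to be empty, for instance when $c$ and $k$ are incompatible because $\gcd(c,p)$ forces $p\mid j$, the sum is trivially zero, and the argument above covers this case as well.
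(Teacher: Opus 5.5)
Your proof is correct and is essentially the paper's argument. Both reduce to the sum over $J$ and translate by a common multiple of $r$ and $k$ that is smaller than $rk$, which preserves $J$ and multiplies every term by a nontrivial root of unity. You use the shift $rk/p$ with a common prime $p$ and the identity $S=\omega_p S$, whereas the paper uses $\mathrm{lcm}(r,k)=rk/\gcd(r,k)$ and splits $J$ into orbits of length $\gcd(r,k)$, each summing to zero; this difference is cosmetic.
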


\begin{proof}
By Proposition~\ref{prop:class-sums-coprime}, it suffices to evaluate
\[
\sum_{j \in J}\omega_{rk}^{\,j},
\qquad
J = \{\, j \in [0,rk-1] : \gcd(j,k)=1,\; j \equiv c \pmod{r}\,\}.
\]

Let \(l=\mathrm{lcm}(r,k)\).  
For any \(j\in J\), we have \(\gcd(j+l,k)=1\) (since \(l\) is a multiple of \(k\) and \(j\) is coprime to \(k\)), and also \(j+l\equiv j \pmod{r}\).  
Thus \(j+l \pmod{rk}\in J\), and because \(l<rk\), the element \(j+l\) is distinct from \(j\).  
It follows that \(J\) decomposes into nontrivial orbits under the shift map \(j\mapsto j+l\pmod{rk}\).

The length of each such orbit is \(d=\gcd(r,k)\).  
Let \(O\subseteq J\) be one such orbit, say
\[
O = \{\,j_{0}+ ml \pmod{rk}\ : m=0,1,\dots,d-1 \,\}.
\]
Then
\[
\sum_{j\in O}\omega_{rk}^{\,j}
= \sum_{m=0}^{d-1}\omega_{rk}^{\,j_{0}+ m l}
= \omega_{rk}^{\,j_{0}}\sum_{m=0}^{d-1} (\omega_{rk}^{\,l})^{m}.
\]

Since \(l=\mathrm{lcm}(r,k)=\tfrac{rk}{d}\), we have
\[
\omega_{rk}^{\,l} = \omega_{rk}^{\,rk/d} = \omega_{d}.
\]
Therefore,
\[
\sum_{j\in O}\omega_{rk}^{\,j}
= \omega_{rk}^{\,j_{0}}\sum_{m=0}^{d-1}\omega_{d}^{\,m}.
\]
But \(\sum_{m=0}^{d-1}\omega_{d}^{\,m}=0\).  
Hence every orbit \(O\) contributes zero, and thus
\[
\sum_{g \in C}\omega_{rn}^{\,\mathrm{fmaj}(g)}=0.
\]
\end{proof}

Bringing together all previous propositions, we can now prove 
the main result of this section. 

\begin{proof}[Proof of Theorem~\ref{thm:class-sum-formula}]
There are three cases to consider.  
If \(C\) is not of the special form \(k_{c}^{\,n/k}\), then the sum is \(0\), as established in Proposition~\ref{prop:class-sums-coprime}.  
If \(C=k_{c}^{\,n/k}\) with \(\gcd(r,k)>1\), the sum is again \(0\), by Proposition~\ref{prop:last}.  
Finally, if \(C=k_{c}^{\,n/k}\) with \(\gcd(r,k)=1\), the evaluation is given by Proposition~\ref{prop:almostlast}.  
\end{proof}

\section*{Acknowledgments}

This paper is part of the author's ongoing Ph.D. research at Bar-Ilan University, under the supervision of Ron M. Adin and Yuval Roichman. The author is deeply grateful to both for their guidance, insightful discussions, and continued support. The author would also like to thank his fellow graduate students for stimulating mathematical discussions.

\printbibliography
\end{document}